\newcommand{\diag}{\mathbf{diag}}
\newcommand{\Rn}{\mathbb{R}^n}
\newcommand{\Rm}{\mathbb{R}^m}
\newcommand{\Rs}{\mathbb{R}^s}
\newcommand{\pcs}{PCS\xspace}
\newcommand{\pl}{PL\xspace}
\newcommand{\anf}{ANF\xspace}
\newcommand{\ad}{AD\xspace}
\newcommand{\ssn}{LSSN\xspace}
\newcommand{\lcp}{LCP\xspace}
\newcommand{\sig}{\mathcal{S}}
\newcommand{\suc}{{\mathrm{succ}}}
\newcommand{\pre}{{\mathrm{prec}}}
\newcommand{\al}{\alpha}
\newcommand{\ga}{\gamma}
\newcommand{\Om}{\Omega}
\renewcommand{\emptyset}{\varnothing}
\renewcommand{\subset}{\subseteq}
\newcommand{\numberset}[1]{\mathbb{#1}}
\newcommand{\R}{\numberset{R}}
\newcommand{\N}{\numberset{N}}
\newcommand{\chk}[1]{\check{#1}}
\newcommand{\roo}[1]{\mathring{#1}}
\newcommand{\cu}{{\chk u}}
\newcommand{\hu}{{\hat u}}
\newcommand{\rx}{{\roo x}}
\newcommand{\cx}{{\chk x}}
\newcommand{\hx}{{\hat x}}
\newcommand{\ry}{{\roo y}}
\newcommand{\cy}{{\chk y}}
\newcommand{\hy}{{\hat y}}
\newcommand{\rz}{{\roo z}}
\newcommand{\cz}{{\chk z}}
\newcommand{\hz}{{\hat z}}
\newcommand{\bd}{\partial}
\newcommand{\restr}[1]{\lower0.4ex\hbox{$\vert$}\lower0.7ex\hbox{ $\!_{#1}$ }}
\renewcommand{\epsilon}{\varepsilon}
\newcommand{\abs}{{\mbox{abs}}}
\theoremstyle{plain}
\newtheorem{proposition}{Proposition}[section]
\newtheorem{theorem}[proposition]{Theorem}
\newtheorem{corollary}[proposition]{Corollary}
\newtheorem{lemma}[proposition]{Lemma}
\newtheorem{definition}[proposition]{Definition}
\theoremstyle{remark}
\newtheorem{remark}[proposition]{Remark}
\newtheoremstyle{custom}{5pt}{5pt}{}{}{\bfseries}{:}{5pt}{}
\theoremstyle{custom}
\begin{document}


\title{An Open Newton Method for Piecewise Smooth Systems}


\author[1]{Manuel Radons}
\author[2]{Lutz Lehmann} 
\author[3,2]{Tom Streubel}
\author[4]{Andreas Griewank}
\affil[1]{Technical University of Berlin, Germany}
\affil[2]{Humboldt University of Berlin, Germany}
\affil[3]{Zuse Institute Berlin, Germany}
\affil[4]{School of Information Sciences Yachaytech, Ecuador}

\date{\scriptsize{\textit{radons@math.hu-berlin.de, llehmann@math.hu-berlin.de, streubel@zib.de, griewank@yachaytech.edu.ec}}}



\maketitle

\begin{abstract}\noindent
Recent research has shown that piecewise smooth (PS) functions can be approximated by piecewise linear functions with second order error in the distance to
a given reference point. A semismooth Newton type algorithm based on successive application of these piecewise linearizations was subsequently developed
for the solution of PS equation systems. For local bijectivity of the linearization
at a root, a radius of quadratic convergence was explicitly calculated in terms
of local Lipschitz constants of the underlying PS function. In the present work
we relax the criterium of local bijectivity of the linearization to local openness.
For this purpose a weak implicit function theorem is proved via local mapping
degree theory. It is shown that there exist PS functions $f:\R^2\rightarrow\R^2$ satisfying the weaker
criterium where every neighborhood of the root of $f$ contains a point $x$ such that
all elements of the Clarke Jacobian at $x$ are singular. In such neighborhoods
the steps of classical semismooth Newton are not defined, which establishes
the new method as an independent algorithm. To further clarify the relation between a PS function and its piecewise linearization,
several statements about structure correspondences between the two are proved. 
Moreover, the influence of the specific representation of the local piecewise linear models
on the robustness of our method is studied.
 An example application from cardiovascular mathematics is given.
\end{abstract}

{\bf Keywords}
Piecewise Smoothness, Newton's Method, Semismooth Newton, Robustness, Sensitivity Analysis, Cardiovascular Mathematics
 
 \vspace{4pt}
 
 \noindent 
 {\bf MSC2010}
 65D25, 65K10, 49J52

\section{Introduction}


The main objective of this article is the study of the generalized Newton's methods developed
in \cite{NewtonPL} for solving the equation \[F(x^*)\,=\,0\,, \]
where $F:\Rn\to\Rm$ is a continuous, piecewise smooth function. We are especially interested 
in situations that are singular in that there may be singular matrices both in the interior 
and the boundary of the Clarke Jacobian at $x^*$. That is, there may be singular 
limiting Jacobians in the sense of Facchinei and Pang, cf. \cite[p.627]{pang2003cp}.

Throughout we assume that $F$ can be computed by a finite program called \emph{evaluation procedure}.
An evaluation procedure is a composition 
of so-called elemental functions which make up the atomic constituents of more complex functions.
In the classical setting of \textit{algorithmic differentiation} (\ad) functions are admissible to a library $\Phi$ of elemental functions if they are at least once Lipschitz-continuously differentiable on their valid open domains. This condition is also called \textit{elemental differentiability} (ED), cf. \cite[p.23]{GriewankAD}.  
Common examples are: 
\[ \Phi\ :=\ \{ +, -, \ast, /, \sin, \cos, \tan, \cot, \exp, \log, \dots \}\; . \]
In our case, we will allow the evaluation procedure of $F: D\subseteq\R^n\rightarrow\R^m$ to contain, in addition to the usual smooth elementals, the absolute value $\abs(x) = |x|$, that is, our library is of the form \[\Phi_{\rm abs}\ :=\  \Phi\ \cup\ \{\abs \}\; .\]
Consequently, we can also handle the maximum and minimum of two values via the identities
\begin{align}\label{eq:identities}
\max(u,v)\, =\,\frac{u+v+|u-v|}{2},\qquad \min(u,v)\, =\,\frac {u+v-|u-v|}{2}\; .
\end{align}
We call the resulting class of functions {\em piecewise composite smooth} (PCS) and denote it by ${\rm span}(\Phi_{\rm abs})$. PCS functions are locally Lipschitz continuous and differentiable almost everywhere in the classical sense. 
Furthermore, they are differentiable in the sense of Bouligand and Clarke, cf. \cite{clarke}. 

It has been shown in \cite{griewank2013stable} that \pcs functions can be approximated locally 
with a second order error in the distance to a given reference point by piecewise linear (\pl) models.
In analogy to second order tangent and secant local linear approximations of smooth functions a tangent and a secant piecewise linearization mode were constructed.
We will denote the tangent mode piecewise linearization of a \pcs function $F:\Rn\to\Rm$
at a reference point $\rx$ by $\lozenge_{\rx}F$. The secant mode piecewise linearization 
at a pair of reference points $\cx,\hx$ will be denoted by  $\lozenge_{\cx}^{\hx}F$.
A key feature of these local models is that they vary Lipschitz-continuously with respect 
to perturbations of their reference point(s). This allowed to devise two generalized Newton's methods for \pcs functions $F:\Rn\to\Rn$, one using tangent mode piecewise linearizations (Algorithm \ref{algo:tNewton}) 	
		\begin{algorithm}
			\SetKwInOut{Input}{Step 0}
			\SetKwInOut{Output}{Step k}
			
			\Input{$x_0\in\Rn$ close to root of $F$}
			\Output{Compute tangent piecewise linearization $\lozenge_{x_{k-1}}F$ at $x_{k-1}$ and set $x_k$ to 
				solution of $\lozenge_{x_{k-1}}F=0$ with minimal distance to $x_{k-1}$}
			
			\caption{Tangent Mode Generalized Newton}
			\label{algo:tNewton}
		\end{algorithm}
	
	    	and one using their secant mode counterpart (Algorithm \ref{algo:sNewton}).		

	\begin{algorithm}
			\SetKwInOut{Input}{Step 0}
			\SetKwInOut{Output}{Step k}
			
			\Input{$x_0, x_1\in\Rn$ close to root of $F$}
			\Output{Compute secant piecewise linearization $\lozenge_{x_{k-1}}^{x_k}F$ at $x_{k-1}, x_k$ and set $x_{k+1}$ to 
				solution of $\lozenge_{x_{k-1}}^{x_k}F$ with minimal distance to $x_k$}
			
			\caption{Secant Mode Generalized Newton}
			\label{algo:sNewton}
		\end{algorithm}
	
	     In Algorithm \ref{algo:ssNewton} we present a schematic of semismooth Newton for locally Lipschitz functions $f:\Rn\to\Rn$ (\ssn), cf. \cite{kummer1988ssn}.
	Comparing it to Algorithms \ref{algo:tNewton} and \ref{algo:sNewton} 
	 one can see that the local \pl approximations take over the role that generalized (Clarke) Jacobians have in the classical algorithm. 
	Tangent mode \pl approximations and Clarke Jacobian reduce to local linear approximations (i.e., Jacobians) at nonsingular reference points. Hence, in the smooth case Algorithms \ref{algo:tNewton} and \ref{algo:ssNewton} coincide with Newton's method. The secant mode \pl approximations reduce to a form which is best described as a finite difference approximation of the Jacobian, cf. Section \ref{sec:basics}.
		 
	\begin{algorithm}
		\SetKwInOut{Input}{Step 0}
		\SetKwInOut{Output}{Step k}
		
		\Input{$x_0\in\Rn$ close to root of $F$}
		\Output{Select invertible element $A$ of Clarke Jacobian at $x_{k-1}$ and 
			compute $x^k:= A^{-1}[Ax^{k-1} -f(x^{k-1}) ]$}
		\caption{Semismooth Newton with Clarke Derivatives}
		\label{algo:ssNewton}
	\end{algorithm}

		    	It was proved in \cite{NewtonPL} that if the tangent mode piecewise linearization at a root $x^*$ of a \pcs function $F:\Rn\to\Rn$ is locally homeomorphic in some neighborhood of $x^*$, then close to this root the tangent and the secant mode of generalized Newton converge (quadratically resp. superlinearly) to $x^*$.
In this work we will, after a brief introduction to the theory of \pl functions 
and the piecewise linearization of \pcs functions (Section \ref{sec:basics}), relax this criterium to local openness of the tangent mode piecewise linearization at $x^*$ (Section \ref{sec:implicit-fkt}). 
A necessary condition for local openness of the tangent mode piecewise linearization at $x^*$ is
local openness of the underlying \pcs function at $x^*$. A sufficient condition is metric regularity of the underlying \pcs function at $x^*$ (Section \ref{sec:implicit-fkt}).

One rather intriguing feature of the local openness condition is that it includes cases where
the Clarke Jacobian at $x^*$ is singular, while the classical criterium for convergence of Algorithm \ref{algo:ssNewton} requires nonsingularity of the Clarke Jacobian at $x^*$, cf. \cite{Qi1993}. Moreover, the set of limiting Jacobians of $\lozenge_{x^*}F$ at $x^*$ is
merely a subset of the limiting Jacobians of $F$ at $x^*$. Hence $\lozenge_{\rx}F$ may be locally 
open even if some of the limiting Jacobians at $x^*$ are singular. 
 We will present an example problem for which this is the case in Section \ref{sec:counterexamples}. 

Another aspect that has to be considered comparing our methods to \ssn are the different
computational challenges that arise during the determination of the successive iterates. 
The question whether a given piecewise linear system has a solution is NP-complete. This follows immediately from Chung's proof of the NP-completeness of the general linear complementarity 
problem (\lcp) \cite{Chung1989} since the latter can be equivalently reformulated as a 
piecewise linear system called \textit{absolute value equation} (AVE); see, e.g., \cite{Prokopyev2007}  or \cite{Mangasarian2014}, also compare Section \ref{sec:ANF-perturbations}.
 In consequence, finding a root with minimal distance to a reference point of a piecewise linearization is a potentially hard computational task. 
Moreover, any perturbation of the reference point may cause an exponential number of
roots to appear or to vanish at arbitrary locations. 
We thus devote Section \ref{sec:unambiguous} to the study of conditions for the unambiguous 
computation of the next iteration point.

In Section \ref{sec:numerics} we present an example application from cardiovascular mathematics. We conclude with some final remarks. 


\section{Algorithmic Piecewise Linearization}\label{sec:basics}

In this section we will present the tangent and secant linearization modes which were developed in \cite{griewank2013stable} and further analyzed in \cite{NewtonPL} and \cite{PDODE}.  
\begin{figure}[htp]
	\centering
	\subfigure[Tangent mode linearization]{\includegraphics[scale=.9]{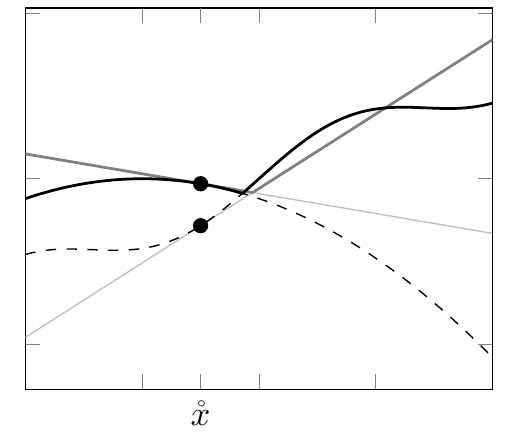}}
	\hspace{.2cm}
	\subfigure[Secant mode linearization]{\includegraphics[scale=.9]{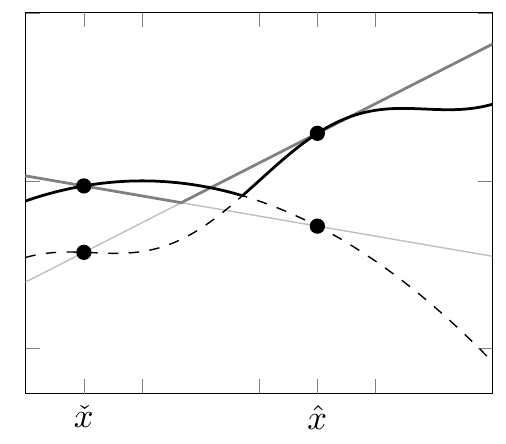}}
	\caption{Piecewise linearization modes}	
	\label{Fig:Modes}
\end{figure}\\

\subsection{Piecewise Linear Functions}
A continuous function $F:\mathbb R^n \rightarrow \mathbb R^m$ is called \textit{piecewise linear} if there exists a \textit{finite} set of affine functions $F_i(x)= A_ix+b_i$, such that $F$ coincides with an $F_i$ for every $x\in \mathbb R^n$ \cite[p.15ff]{scholtespl}. The $F_i$ are called \textit{selection functions}. If $F$ coincides with $F_i$ on a set $U\subset\mathbb R^n$, we say that $F_i$ is \textit{active} on $U$. Any piecewise linear function $F:\mathbb R^n \rightarrow \mathbb R^m$ admits a corresponding (nonunique) partition of $\mathbb R^n$ into \textit{nonempty} (and thus $n$-dimensional) convex polyhedra such that on each polyhedron in the partition exactly one selection function is active \cite[p.28]{scholtespl}.

A piecewise linear function $F:\Rn\to\Rn$ is called (locally) \textit{coherently oriented} if the linear parts of its selection functions all have the same nonzero determinant sign (on some some neighborhood $U\subset\Rn$). It is well known that for piecewise linear functions the properties of (local) coherent orientation and (local) openness are equivalent; cf \cite[Prop. 2.3.7]{scholtespl}.

Let the index set $I =\lbrace 1,...,k \rbrace$ of the selection functions be given.
According to \cite[Prop.2.2.2]{scholtespl} we can find subsets $M_1,...,M_l\subset I$ so that a scalar valued piecewise linear function $F$ can be expressed as 
\[F(x) = \max_{1\leq i\leq l}\min_{j\in M_i} F_j(x)\; .	\]	
This concept, which is called  \textit{max-min form}, naturally carries over to vector valued functions $F$, where we can find such a decomposition for every component of the image. Via \eqref{eq:identities}
the max-min form can be expressed in terms of $s\geq 0$ encapsulated absolute absolute value functions $| z_i |$, whose arguments $z_i$ are called {\em switching variables.} Observing that each $z_i$ is an  affine function of absolute values $|z_j|$ with $j < i$ and the independents $x_k$ for $k \leq n $, one arrives at an \textit{abs-normal} form (\anf)
\begin{eqnarray}
\begin{bmatrix} z  \\ y  \end{bmatrix} \;  = \; \begin{bmatrix} c \\  b \end{bmatrix} +   \begin{bmatrix} Z & L \\ J & Y   \end{bmatrix} \; 
\begin{bmatrix} x \\  |z|  \end{bmatrix} \label{eq:absnormal}  \; .
\end{eqnarray} 
Here the two vectors and four matrices specifying the function $F$ have the formats
$$ c \in \R^s, \;  Z \in \R^{s\times n}, \;  L \in   \R^{s\times s}, \;   b \in \R^m, \; J \in   \R^{m\times n}, \;   Y \in   \R^{m\times s}. $$ 
The matrix $L$ is strictly lower triangular so that for given $x$ the components of $z=z(x)$ and thus $|z|$ can be unambiguously  computed one by one.  In this sense, the ANF can be regarded as a matrix representation of the straight-line code of a piecewise linear function.
We give a simple $\R^2\rightarrow\R^2$ example to illustrate this:    
\[ 
\left[\begin{array}{c}
0 \\ 0 \\ 0 \\ \hline 0 \\ 0
\end{array}\right] + 
\left[\begin{array}{rr|rrr} 
1 & -1 & 0 & 0 & 0 \\ 
0 & 1 & 0 & 0 & 0 \\
1 & 0 & 0 & -1 & 0 \\ 
\hline 
1 & 0 & 1 & 0 & 1 \\ 
0 & 1 & 0 & 0 & 0 
\end{array}\right]
\left[\begin{array}{c} 
x_1 \\ 
x_2 \\
\hline 
\lvert z_1 \rvert \\ 
\lvert z_2 \rvert \\ 
\lvert z_3 \rvert
\end{array}\right] = 
\left[\begin{array}{c} 
z_1 \\
z_2 \\ 
z_3 \\ 
\hline
y_1 \\ 
y_2 
\end{array}\right] 
\quad \quad 
\begin{array}{l}
z_1= x_1 - x_2 \\
z_2=x_2 \\
z_3 = x_1 -|z_2| \\
F_1(x_1,x_2) = x_1+\lvert z_1\rvert + \lvert z_3 \rvert \\
F_2(x_1,x_2) = x_2 
\end{array}
\]

For more detailed information on this structure and its properties we refer to \cite{griewank2013stable}, and \cite{griewank2014abs}.

\subsection{Piecewise Linearization}

We now want to compute an incremental approximation $\Delta y = \Delta F(\rx;\Delta x)$ to $F(\rx + \Delta x)-F(\rx)$
at a given $\rx$ and for a variable increment $\Delta x$.
Assuming that all functions other than the absolute value
are differentiable, we introduce the propagation rules
for binary arithmetic operations, unary smooth functions $\varphi$ and the
absolute value
\begin{equation}\label{Eqn:TangentLinearization}
	\begin{array}{llll}
		\Delta v_i=\Delta v_j\pm \Delta v_k  & \mbox{for }  \mathring{v}_i=\mathring{v}_j\pm \mathring{v}_k\; , &\\
		\Delta v_i=\mathring{v}_j\ast\Delta v_k+\Delta v_j\ast \mathring{v}_k  & \mbox{for } \mathring{v}_i=\mathring{v}_j\ast \mathring{v}_k\; , &  \\
		\Delta v_i=\mathring{c}_{ij}\Delta v_j & \mbox{for } \mathring{v}_i=\varphi_i(\mathring{v}_j),\;\;\varphi_i\neq \abs\;,  \;\;\; \mbox{with } \mathring{c}_{ij}=\varphi_i'(\mathring{v}_j)\\
		\Delta v_i=\abs(\mathring{v}_j+\Delta v_j)-\abs(\mathring{v}_j)  &  \mbox{for } \mathring{v}_i=\abs(\mathring{v}_j)\; .
	\end{array}
\end{equation}
Whenever $F$ is ED and thus globally differentiable (i.e., there are no $\abs$ calls
in the evaluation procedure) we get $\Delta y = F'(\rx)\Delta x$,
where $F'(\rx)\in \R^{m\times n}$ is the Jacobian of $F$ at $\rx$.

The propagation rules \eqref{Eqn:TangentLinearization} define the
\emph{tangent mode piecewise linear approximation} of $F$ at a certain point $\rx$. However,
there are applications of piecewise linearization (especially concerning ODE integration)
where one wants to consider approximations of $F$ based on secants in the following sense.
Given two points $\check x, \hat x$ in the domain, the secant approximation will be a piecewise linear approximation of $F$ that is exact at the points $\cx$ and $\hx$ and reduces to the secant over the line through $\cx,hx$ for smooth functions complying with ED. For the notation of the
secant approximation the increment 
$\Delta F(\Delta x)=\Delta F(\cx,\hx;\Delta x)$
will be relative to the reference point 
$\mathring x = (\check x + \hat x)/2$
and the reference value $\mathring F= (F(\check x) + F(\hat x))/2$, that is
\begin{equation}\label{Eqn:SecantApproximation}
	F(x)\ \approx\ \mathring F + \Delta F(\cx,\hx; x-\mathring x)\; .
\end{equation}


The actual definition of the secant approximation in \eqref{Eqn:SecantApproximation} is again given recursively
over the composition steps which allows a direct translation into an
AD like algorithm. 
The differentiation rules for the secant mode are mostly similar 
to the ones for the tangent mode in \eqref{Eqn:TangentLinearization}, 
with the following modifications
\begin{itemize}
  \item the tangent slope $\mathring{c}_{ij}$ in the third line 
	has to be replaced by the secant slope
	\begin{equation}
	\mathring c_{ij} = \left\lbrace\begin{array}{r l} \varphi'_i(\mathring v_j) & \mbox{ if }\check v_j = \hat v_j \vspace{0.2cm} \\ \dfrac{\hat v_i - \check v_i}{\hat v_j - \check v_j} &\mbox{ otherwise}\end{array}\right. \; . \label{eqn:secant_cij}
	\end{equation}
  \item in the last line the offset has to be taken against the 
    reference value,
	\begin{equation}
		\Delta v_i=\abs(\mathring{v}_j+\Delta v_j)-\mathring{v}_i,  ~~  \mbox{ where now } \mathring{v}_i= \frac{\check v_i + \hat v_i}2 = \frac{|\check v_j|+|\hat v_j|}2.
	\end{equation}
  
\end{itemize}
The secant approximation depends to an even greater degree on the given composition of $F$ by elementary functions,
a different evaluation procedure for the same function may give a different secant approximation. 

Similar to the tangent mode, if $F$ complies with ED and is thus again globally differentiable, the secant approximation is linear in the $x$ increment.
We can write this as 
\begin{equation}\label{eq:secantpropmatrix}
\Delta F(\hat x, \check x; \Delta x) 
= \nabla_{\check x}^{\hat x}F(\cx,\hx) \cdot \Delta x,
\end{equation} and call \(\nabla_{\check x}^{\hat x}F\) the \emph{propagated secant matrix}. 
The entries of the propagated secant matrix are calculated using the rules in table \eqref{Eqn:TangentLinearization} while employing the secant slope formula \eqref{eqn:secant_cij} for \(\mathring c_{ij}\). 
Moreover, if $\check x = \hat x$, we obtain $\Delta F(\check x, \hat x;\Delta x) = \Delta F(\rx,\Delta x)$, i.e., in this case the tangent and secant linearizations are identical.

A complete discussion of this topic can be found in \cite[Sec. 7]{griewank2013stable}. Additionally, a division-free and thus numerically stable implementation is detailed in \cite[Sec. 6]{NewtonPL}. 


%

\subsubsection*{Approximation quality}

Denote by $\lozenge_{\rx} F(x) \equiv F(\rx) + \Delta F(\rx; x - \rx)$ the tangent mode piecewise linearization of $F$ at the reference point
 $\rx\in\mathbb R^n$ and by $\lozenge_{\cx}^{\hx}F(x) \equiv \tfrac 12(F(\hx) + F(\cx)) + \Delta F(\hx, \cx; x - \tfrac 12(\hx + \cx))$ the secant mode piecewise linearization at the reference points  $\cx,\hx\in\mathbb R^n$. The two results from \cite{NewtonPL} that we will draw on frequently, are: 
\begin{proposition}\label{prop:approx}
	Suppose $\cx, \hx, \cy, \hy, \cz, \hz \in \R^n$ are restricted to a sufficently small closed convex neighborhood $K \in \R^n$ where the evalution procedure  for $ F : \R^n \mapsto \R^m $ is well defined. Then there exists a Lipschitz constant $\gamma_F$ such that it holds
	\begin{enumerate}
		\item For all $x\in K$ we have:   
		\begin{eqnarray*}
			\|F(x)-   \lozenge_{\cx}^{\hx}  F(x) \| &\; \leq \; & \tfrac{1}{2}  \gamma_F \|x - \hx\|\|x - \cx\| \; ,  \\[0.2cm]  
			\|F(x)-   \lozenge_{\rx} F(x) \|  &\; \leq \; &\tfrac{1}{2}  \gamma_F \|x -\rx\|^2\; .
		\end{eqnarray*}
		
		\item For all $x\in\mathbb R^n$ we have: 
		{}
		%
		\begin{align*}
		\|   \lozenge_{\cz}^{\hz}  F(x) - \lozenge_{\cy}^{\hy}  F(x) \| &\; \leq \; \gamma_F \; 
		\max \left [\| \hz-\hy \| \max ( \|x-\cy\|,  \|x-\cz\| ),  \right .  \\  
		&  \hspace*{2.1cm}  \left . \| \cz-\cy \| \max ( \|x-\hy\|,  \|x-\hz\| ) \right ] \; , \\[0.2cm]
		\|   \lozenge_{\rz}  F(x) - \lozenge_{\ry}  F(x) \|  &\; \;  \, \leq \quad 
		\gamma_F \quad \;  \,  \|\rz-\ry\|  \max (\|x-\ry\|,  \|x-\rz\|  ) \; .  &
		\end{align*}

	\end{enumerate} 
\end{proposition}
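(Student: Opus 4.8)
The plan is to prove all four inequalities by structural induction over the evaluation procedure that defines $F$, propagating matching bounds through each elemental operation. For every intermediate variable $v_i$ of the procedure I would introduce the true value $v_i(x)$, the reference value $\mathring v_i = v_i(\rx)$, and the linearized increment $\Delta v_i$ produced by the rules \eqref{Eqn:TangentLinearization}. The induction hypothesis at node $i$ is twofold: a second-order approximation bound $|v_i(x) - (\mathring v_i + \Delta v_i)| \le \tfrac12 \gamma_i \|x-\rx\|^2$, together with a Lipschitz bound $|\Delta v_i(x) - \Delta v_i(x')| \le \ell_i\|x-x'\|$ on the increment itself. Since $K$ is compact and convex and the evaluation procedure is well defined on it, the finitely many smooth elementals have uniformly bounded values and first and second derivatives on $K$; these uniform bounds are what get assembled into the global constant $\gamma_F$, while the vector norm for $F$ appears only at the end by combining the finitely many output components.

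For the inductive step I would treat the three elemental types in turn. Linear operations $v_i = v_j \pm v_k$ simply add the errors and Lipschitz moduli. For a smooth unary $v_i = \varphi(v_j)$ the Taylor remainder along the segment in $K$ contributes a genuinely second-order term $\tfrac12\sup_K|\varphi''|\,(v_j(x)-\mathring v_j)^2$, while the propagated error is scaled by $|\varphi'(\mathring v_j)|$; the product rule case is analogous, the cross term $\Delta v_j\,\Delta v_k$ being second-order small and the factors bounded on $K$. The decisive observation is the absolute-value rule: because $\abs$ is $1$-Lipschitz and its tangent linearization $\Delta v_i = |\mathring v_j + \Delta v_j| - |\mathring v_j|$ captures its single kink exactly, this step introduces no new second-order error and passes the incoming error through with factor $1$. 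This is precisely why the nonsmoothness does not degrade the order of approximation, and it is the crux of the whole estimate.

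For the secant bound in Part 1, I would run the same induction but with the tangent slope $\mathring c_{ij}=\varphi_i'(\mathring v_j)$ replaced by the divided difference \eqref{eqn:secant_cij} and the offset taken against $\mathring v_i=(\cv_i+\hv_i)/2$. The construction is engineered so that the linearization is exact at both $\cx$ and $\hx$; consequently the error, viewed as a function of $x$, carries a factor vanishing at each reference point, which is what produces the bilinear majorant $\tfrac12\gamma_F\|x-\hx\|\|x-\cx\|$ in place of the squared distance. The Lipschitz-in-reference-point estimates of Part 2 follow from the same bookkeeping applied to the difference of two linearizations built at $\ry,\rz$ (respectively $\cy,\hy$ and $\cz,\hz$): one tracks, node by node, how perturbing the reference data shifts each $\mathring v_i$ and each slope, verifying that the perturbation enters linearly in $\|\rz-\ry\|$ while being modulated by the distance of $x$ to the reference points.

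The main obstacle is bookkeeping rather than a single hard idea: one must keep the approximation error, the nonlinearity of $\abs$, and the gap between the \emph{propagated} increment $\Delta v_j$ and the \emph{true} increment $v_j(x)-\mathring v_j$ cleanly separated at every node, and one must verify that the clean product/bilinear structure survives multiplication and composition, where naive estimates tend to mix the two reference points and spoil the factorization. Controlling the accumulated constant so that a single $\gamma_F$, depending only on $K$ and on the finitely many elementals, dominates all nodes simultaneously is the part that requires the most care. Since these statements are quoted from \cite{NewtonPL}, the argument above reproduces in outline the estimates established there.
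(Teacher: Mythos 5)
The paper itself contains no proof of this proposition: it is imported verbatim from \cite{NewtonPL}, the only accompanying remark being that an upper bound for $\gamma_F$ can be computed as described there. So there is no in-paper argument to compare against; what can be said is that your outline reconstructs the same route taken in the cited source, namely induction over the evaluation procedure, and that its two decisive observations are the right ones: the $\abs$-rule $\Delta v_i=\abs(\mathring v_j+\Delta v_j)-\abs(\mathring v_j)$ passes incoming errors through with factor $1$ (reverse triangle inequality, so the kink costs nothing in the order of approximation), while each smooth elemental contributes a Taylor remainder controlled by uniform second-derivative bounds on the compact set $K$, scaled by Lipschitz bounds on the intermediate values. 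One caution on the secant case: your stated inference ``the linearization is exact at $\cx$ and $\hx$, hence the error carries a factor vanishing at each reference point, hence the bilinear majorant'' is not valid as it stands — vanishing at two points does not by itself yield a product bound $\tfrac12\gamma_F\|x-\cx\|\,\|x-\hx\|$. The correct mechanism, which your ``run the same induction with secant slopes'' sentence implicitly contains, is to carry the bilinear bound as the induction hypothesis itself, using for each smooth elemental the standard linear-interpolation remainder estimate $|\varphi(v)-p(v)|\le\tfrac12 L_{\varphi'}|v-\cv|\,|v-\hv|$ (with $p$ the affine interpolant at $\cv,\hv$) and again the factor-$1$ pass-through for $\abs$; exactness at the two reference points is a consequence of the construction, not the engine of the estimate. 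With that repair, and with the part-2 bookkeeping carried out at the same level of detail as part 1, your outline is a faithful reconstruction of the argument in \cite{NewtonPL}.
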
 
Note that in \cite{NewtonPL} it was also described how to calculate an upper bound for $\gamma_F$. 

\subsubsection*{Representation in abs-normal form}

We will represent all our piecewise linearizations in \anf. For the tangent mode the \anf can be computed as follows.
Let \(F:\,\R^n \to \R^m\) be a \pcs function, then there are smooth vector valued evaluation procedures \(G:\,\R^{n} \times \R^s \to \R^s\) and \(\tilde F:\,\R^n \times \R^s \to \R^m\) of operations in \(\Phi\) and hence complying to ED so that
\begin{align}
  \begin{aligned}
    z \, =\, G(x, |z|)\qquad\text{and}\qquad
    F(x) \, =\, \tilde F(x, |z|)\,,
  \end{aligned} \label{eq:F_nonsmoothANF} 
\end{align}
where the partial derivative matrix \(\frac\partial{\partial w} G(x, w)\) is always of strictly lower triangular form. Applying an order \(1\) Taylor Expansion on \(\tilde F\) and \(G\) yields:
\begin{align}
  \begin{bmatrix}
    z \\
    y
  \end{bmatrix} \ =\ 
  \begin{bmatrix}
    G(\mathring x, |\mathring z|) \\
    \tilde F(\mathring x, |\mathring z|)
  \end{bmatrix}\, +\, 
  \begin{bmatrix}
    \frac\partial{\partial x} G(\mathring x, |\mathring z|) & \frac\partial{\partial |z|} G(\mathring x, |\mathring z|) \\
    \frac\partial{\partial x} \tilde F(\mathring x, |\mathring z|) & \frac\partial{\partial |z|} \tilde F(\mathring x, |\mathring z|)
  \end{bmatrix}\, \cdot\, 
  \begin{bmatrix}
    x - \mathring x \\
    |z| - |\mathring z|
  \end{bmatrix}\,. \label{eqn:PLANF}
\end{align}
Now system \eqref{eqn:PLANF} is a piecewise linear operator mapping \(x\) to \(y = \lozenge_{\rx} F(x)\) via intermediate vector switching variables \(z\).
Setting \(Z \equiv  \frac \partial{\partial x} G\), \(L \equiv  \frac \partial{\partial |z|} G\), \(J \equiv  \frac \partial{\partial x} F\), \(Y \equiv  \frac \partial{\partial |z|} F\) as well as \(c \equiv G(\mathring x, |\mathring z|) - Z\rx - L|\mathring z|\) and \(b \equiv \tilde F(\mathring x, |\mathring z|) - J\rx - Y|\mathring z|\), system \eqref{eqn:PLANF} becomes the \anf \eqref{eq:absnormal}. 
 
Similar to the tangent case, an \anf-representation of secant piecewise linearizations can be computed by replacing the Jacobi matrices with
the propagated secant matrices of \eqref{eq:secantpropmatrix} at the 
combined base points $(\cx,\cz)$ and $(\hx,\hz)$. 
The coefficient vectors \(c\), \(b\) and four matrices \(Z, L, J\) and \(Y\) of the \anf are then obtained from
\begin{align*}
  c &\equiv \mathring G - L\cdot|\mathring G| \in \R^s, &  b &\equiv \mathring F - Y\cdot|\mathring F| \in \R^m, &
  [Z\,\,\, L] &= \nabla_{\check x, |\check z|}^{\hat x, |\hat z|} G, & [J\,\,\, Y] &= \nabla_{\check x, |\check z|}^{\hat x, |\hat z|} F\,,
\end{align*}
where \(\mathring x = (\check x + \hat x)/2\) and \[\mathring G\, \equiv\, 
\frac{G(\check x, |\check z|) + G(\hat x, |\hat z|)}{2}
 \qquad\mathring F\, \equiv\, 
 \frac{F(\check x, |\check z|) + F(\hat x, |\hat z|)}{2}\] .
\begin{remark}(Dimensions of the \anf)\label{rem:ANF-dim}
  Let \(F:\Rn\to\Rm\) be a \pcs function with \(s\) absolute values occurring in its evaluation procedure. Then the two vectors and four matrices of its tangent or secant mode piecewise linearization in \anf 
  have the formats
  \[ c \in \R^s, \;  Z \in \R^{s\times n}, \;  L \in   \R^{s\times s}, \;  
  b \in \R^m, \; J \in   \R^{m\times n}, \;   Y \in   \R^{m\times s}, \]
  irrespective of the choice of the reference point(s). In the sensitivity analysis of Section \ref{sec:ANF-perturbations} this key fact will allow us to analyze successive piecewise linearizations of a PCS function as perturbations of a single \anf.
\end{remark} 


\section{Generalized Newton methods by piecewise linearization}

We will now present and analyze the piecewise differentiable Newton's methods proposed in \cite{NewtonPL}. The merit of these methods is the fact that they impose no strong differentiability requirements but need only piecewise differentiability at the root which is to be computed. 

\begin{definition}(Newton operator)
Let $F\in {\rm span}(\Phi_{\rm abs})$ and $x^*$ be an isolated root of $F$ 
in an open neighborhood ${\cal D}$. The Newton step for $F$
is definable on ${\cal D}$ in tangent or secant mode if the piecewise linear equation
$\lozenge_{\rx} F(x)=0$ resp. $\lozenge_{\cx}^\hx F(x)=0$ has at least 
one root for all $\rx\in U$ resp. $\cx,\hx\in {\cal D}$.

Then the Newton operator is defined in tangent mode as 
\[
N(\rx)=\arg\min\{\|x-\rx\|:\lozenge_{\rx} F(x)=0\}\,,
\]
and in secant mode as
\[
N(\rx)=\arg\min\{\|x-\rx\|:\lozenge_{\cx}^\hx F(x)=0\}\,,
\]
where $\rx=\frac12(\cx+\hx)$.
\end{definition}

Employing coherent orientation to replace the invertibility of the 
Jacobian in the smooth Newton method as a regularity condition,
we will proceed to show that close to a root $x^*$ of $F$
\begin{itemize}
 \item the Newton step can be defined
 \item the Newton step stays close to $x^*$
 \item the tangent mode Newton method converges quadratically
 and the secant mode Newton method converges with the golden mean as order.
\end{itemize}

For most of the following results up to the last it is sufficient to consider 
the secant mode piecewise linear approximation of $F$, as results for the tangent 
mode can be obtained by setting $\cx=\hx=\rx$.

The general bound for the distance between piecewise linearizations with different basis points 
$\cz,\hz$ and $\cy=\hy=\ry$
can be refined to a tighter bound in case one expresses that bound exclusively in terms of
distances to $\ry$. 
\begin{lemma}(Refinement for the distance between piecewise linearizations)
  Let the second order constant $\ga_F$ be valid on some convex set $U$ and $x,\ry,\cz,\hz\in U$.
  Then 
  \[
  \|\lozenge_{\cz}^\hz F(x)-\lozenge_{\ry} F(x)\|
  \le \ga_F \Bigl(\max(\|\hz-\ry\|,|\cz-\ry\|)\,\|x-\ry\|+\frac12\|\hz-\ry\|\,\|\cz-\ry\|\Bigr)
  \]
\end{lemma}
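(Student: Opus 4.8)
The plan is to exploit the special structure created by setting the two base points of the inner model equal, $\cy=\hy=\ry$, which turns the secant model $\lozenge_{\ry}^{\ry}F$ into the tangent model $\lozenge_{\ry}F$, and then to \emph{re-centre} the whole estimate at $\ry$ instead of routing it through the general secant--secant inequality of part (2) of Proposition~\ref{prop:approx}. Writing $D(x):=\lozenge_{\cz}^{\hz}F(x)-\lozenge_{\ry}F(x)$, which is again a piecewise linear function of $x$, I would split $D(x)=D(\ry)+\bigl(D(x)-D(\ry)\bigr)$ and estimate the constant term and the increment separately. The point of this split is that the factor $\tfrac12$ is produced \emph{entirely} by the constant term and is lost on the naive route: inserting the triangle inequalities $\|x-\cz\|\le\|x-\ry\|+\|\cz-\ry\|$ and $\|x-\hz\|\le\|x-\ry\|+\|\hz-\ry\|$ into the max-form bound of Proposition~\ref{prop:approx}(2) (specialised to $\cy=\hy=\ry$) yields the same shape but with the full product $\|\hz-\ry\|\,\|\cz-\ry\|$ rather than half of it. Likewise, bounding both models separately against $F$ fails, as the two quadratic errors combine to an inadmissible $\|x-\ry\|^2$ term.

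For the constant term I would use that the tangent model is exact at its reference point, $\lozenge_{\ry}F(\ry)=F(\ry)$, so that $D(\ry)=\lozenge_{\cz}^{\hz}F(\ry)-F(\ry)$. This is exactly a secant approximation error evaluated at $x=\ry$, so the first inequality of Proposition~\ref{prop:approx}(1) applies verbatim with $\cx=\cz$, $\hx=\hz$, $x=\ry$, giving $\|D(\ry)\|\le\tfrac12\gamma_F\|\ry-\hz\|\,\|\ry-\cz\|$. This is precisely the second summand of the asserted bound, and it is the only step that needs the sharp $\tfrac12$-constant from part (1).

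The increment $D(x)-D(\ry)$ is where the real work lies, and I expect it to be the main obstacle. One must show $\|D(x)-D(\ry)\|\le\gamma_F\max(\|\hz-\ry\|,\|\cz-\ry\|)\,\|x-\ry\|$, i.e.\ that the difference of the two models grows at most linearly in $\|x-\ry\|$ with a slope governed by the larger base-point distance. In the smooth (ED) case this is immediate: $D$ is affine, its slope is $\nabla_{\cz}^{\hz}F-F'(\ry)$, and a mean value argument bounds this by $\gamma_F\max(\|\hz-\ry\|,\|\cz-\ry\|)$ since the secant slope equals $F'(\xi)$ for some $\xi$ between $\cz$ and $\hz$. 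In the genuinely piecewise linear case $D$ is only piecewise linear and the two models carry different polyhedral subdivisions, so the estimate must be run on the linear parts of the selection functions of $D$. I would represent both $\lozenge_{\cz}^{\hz}F$ and $\lozenge_{\ry}F$ in a common abs-normal form of identical format (Remark~\ref{rem:ANF-dim}) and bound the perturbation of the propagated matrices $Z,L,J,Y$ under the change of reference point(s), invoking the Lipschitz dependence of the piecewise linearization on its base points that already underlies Proposition~\ref{prop:approx}(2). This gives a uniform bound $\gamma_F\max(\|\hz-\ry\|,\|\cz-\ry\|)$ on every selection-function slope of $D$, hence on its Lipschitz modulus, and therefore the desired linear-growth estimate.

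Combining the two parts by the triangle inequality $\|D(x)\|\le\|D(\ry)\|+\|D(x)-D(\ry)\|$ produces exactly
\[
\|\lozenge_{\cz}^{\hz}F(x)-\lozenge_{\ry}F(x)\|\le\gamma_F\Bigl(\max(\|\hz-\ry\|,\|\cz-\ry\|)\,\|x-\ry\|+\tfrac12\|\hz-\ry\|\,\|\cz-\ry\|\Bigr),
\]
as claimed. The only hypothesis to check is that all points involved lie in the convex set $U$ on which $\gamma_F$ is valid, so that both Proposition~\ref{prop:approx} and the perturbation bound on the propagated matrices apply; convexity of $U$ guarantees this and is why the comparison is controlled uniformly by the single second-order constant.
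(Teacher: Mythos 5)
Your decomposition $D(x)=D(\ry)+\bigl(D(x)-D(\ry)\bigr)$ and the treatment of the constant term are correct: since $\lozenge_{\ry}F(\ry)=F(\ry)$, the bound $\|D(\ry)\|\le\tfrac12\ga_F\|\ry-\hz\|\,\|\ry-\cz\|$ is indeed a verbatim instance of Proposition \ref{prop:approx}.1. The gap sits exactly where you yourself located "the real work", and it is not closed. The increment estimate
\[
\|D(x)-D(\ry)\|\ \le\ \ga_F\max(\|\hz-\ry\|,\|\cz-\ry\|)\,\|x-\ry\|
\]
is a statement about the \emph{slopes} (the Lipschitz modulus) of the piecewise linear function $D$, and nothing in the paper provides slope information: Proposition \ref{prop:approx} consists of function-value bounds only, and for piecewise linear functions pointwise bounds do not control slopes (a sawtooth of amplitude $\eps$ and period $2\delta$ has sup-norm $\eps$ but Lipschitz constant $\eps/\delta$), so no specialization of Proposition \ref{prop:approx}.2 can yield it. Your fallback — writing both models in \anf of common format and bounding the perturbations of $Z,L,J,Y$ — does not repair this as stated: the two models also have \emph{different switching functions} $z^{(1)}(x)\neq z^{(2)}(x)$, so $D$ contains terms of the form $Y_1|z^{(1)}(x)|-Y_2|z^{(2)}(x)|$ whose Lipschitz modulus in turn requires slope bounds on $z^{(1)}-z^{(2)}$ — the same problem one recursion level down. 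Carrying this out would amount to proving, by induction through the propagation rules \eqref{Eqn:TangentLinearization} and \eqref{eqn:secant_cij}, a new slope-level analogue of Proposition \ref{prop:approx}.2, which neither the paper nor its references supply; as written, the key step is an unproven (if plausible) claim.

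The paper closes exactly this hole by a device that never needs slope information: it subdivides the segments $[\cz,\ry]$ and $[\hz,\ry]$ into $N$ equal parts $\cu_k=\ry+\tfrac kN(\cz-\ry)$, $\hu_k=\ry+\tfrac kN(\hz-\ry)$, applies the secant--secant stability bound of Proposition \ref{prop:approx}.2 to each consecutive pair $\lozenge_{\cu_k}^{\hu_k}F$ and $\lozenge_{\cu_{k+1}}^{\hu_{k+1}}F$, and sums the telescope. Each step contributes at most $\ga_F\bigl(\tfrac1N\max(\|\hz-\ry\|,\|\cz-\ry\|)\,\|x-\ry\|+\tfrac{k+1}{N^2}\|\hz-\ry\|\,\|\cz-\ry\|\bigr)$, and letting $N\to\infty$ the second contributions form a Riemann sum $\sum_{k=0}^{N-1}\tfrac{k+1}{N^2}\to\tfrac12$. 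So in the paper the factor $\tfrac12$ arises from integrating along the homotopy of base points, not from Proposition \ref{prop:approx}.1. If you want to keep your two-term decomposition, the honest route is to first prove the slope-level stability lemma by induction over the evaluation procedure; otherwise, replace the increment step by the subdivision argument — at which point the decomposition becomes superfluous, since the telescope already delivers the full bound, including the $\tfrac12$.
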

\begin{proof}
 Select some $N\in \N$ and consider the subdivision of the segments $[\cz,\ry]$ and $[\hz,\ry]$.
 by $\cu_k=\ry+\frac{k}{n}(\cx-\ry)$ and $\hu_k=\ry+\frac{k}{n}(\hx-\ry)$
 Then by Proposition \ref{prop:approx} 
 \begin{align}
  \|\lozenge_{\cu_{k+1}}^{\hu_{k+1}} F(x)-\lozenge_{\cu_{k}}^{\hu_{k}} F(x)\|
  &\le \ga_F\max\left(\begin{gathered}
    \|\hu_{k+1}-\hu_k\|\,\max(\|x-\cu_{k+1}\|,\|x-\cu_k\|)\\
    \|\cu_{k+1}-\cu_k\|\,\max(\|x-\hu_{k+1}\|,\|x-\hu_k\|)            
    \end{gathered}\right)
  \notag\\
  &\le\ga_F \max\left(\begin{gathered}
    \tfrac1N\|\hz-\ry\|\,\bigl(\|x-\ry\|+\tfrac{k+1}N\|\cz-\ry\|\bigr)\\
    \tfrac1N\|\cz-\ry\|\,\bigl(\|x-\ry\|+\tfrac{k+1}N\|\hz-\ry\|\bigr)            
    \end{gathered}\right)
  \notag\\
  &=\ga_F\left(\tfrac1N\max(\|\hz-\ry\|,\|\cz-\ry\|)\|x-\ry\|+\tfrac{k+1}{N^2}\|\hz-\ry\|\,|\cz-\ry\|\right)
 \end{align}
 Summation and taking the limit for $N\to\infty$ results in the claim.
\end{proof}

\subsection{Mapping Degree}

The following definitions and facts can be found, e.g., in \cite[p. 111ff]{ruiz2009deg}. Let $f:\mathbb R^n\rightarrow\mathbb R^n$ be a continuous function, $\Omega\subset\mathbb R^n$ a bounded domain, and let $y\in\mathbb R^n\setminus f(\partial\bar\Omega)$, where $\bar\Omega$ is the closure of $\Omega$ and $\partial\bar\Omega$ denotes the boundary of $\bar\Omega$. We say $y$ is a \textit{regular value} of $f\restr{\Omega}$ if either $(f\restr{\Omega})^{-1}(y)=\emptyset$ or if the differential $D_xf$ of all $x\in f^{-1}(y)$ exists and is invertible. The \textit{local (Brouwer) degree} of $y$ on $\Omega$ is denoted by $\operatorname{deg}(f,\Omega,y)$. We will need the following two of its properties:
\begin{enumerate}
	\item $\operatorname{deg}(f, \Omega,y) = \sum_{x\in \left(f\restr{\Omega}\right)^{-1}(y)}\operatorname{sign}[\det(D_xf)]$ -- which especially implies that \[\left(f\restr{\Omega}\right)^{-1}(y)\ne\emptyset\quad\quad \text{ if }\quad\quad \operatorname{deg}(f, \Omega,y)\ne 0\,.\] 
	\item \textit{Nearness property:} Let $y, y'$ be regular values of $f\restr{\Omega}$. If \[\operatorname{dist}(y,y')\ <\ \operatorname{dist}(y,f(\partial\bar\Omega))\,,\] then $\operatorname{deg}(f, \Omega,y)=\operatorname{deg}(f, \Omega,y')$.
	\item The regular values of $f$ are dense in the codomain.
\end{enumerate}
Also note that for a piecewise affine function $F:\R^n\rightarrow\R^n$ the set $F^{-1}(y)$ of preimages of any regular value $y$ of $F$ is finite and discrete. 

\subsection{Weak Implicit Function Theorem and Convergence}\label{sec:implicit-fkt}

Denote by $B_r(x)$ a full-dimensional ball of radius $r$, centered at $x$.

\begin{lemma}\label{lem:local-surj}
 Let $F\in {\rm span}(\Phi_{\rm abs})$ and $x^*$ be an isolated root of $F$ 
in an open neighborhood ${\cal D}$. If $\lozenge_{x^*}F$ is coherently oriented 
on ${\cal D}$, then there exist radii $r,R>0$ 
so that for any
$\al\in(0,1)$ and $\cx,\hx\in B_{(1-\al)R}(x^*)$ a ball about the root 
is contained in the image of the linearization at $\cx,\hx$,
\[
B_{\al r}(0)\subset \lozenge_\cx^\hx F({\cal D})
\]
\end{lemma}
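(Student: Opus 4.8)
The plan is to establish the two radii by a mapping-degree argument: first pin down a ball on which the tangent linearization $G:=\lozenge_{x^*}F$ has nonzero Brouwer degree at $0$, and then transfer this nonvanishing to the nearby secant linearizations $H:=\lozenge_{\cx}^{\hx}F$ by combining the approximation estimate of the preceding Lemma with the homotopy and value-nearness invariances of the degree. Throughout I would work on balls $B_R(x^*)\subset\mathcal D$ on which $\ga_F$ is valid, and I would choose $R$ small at the very end.

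First I would analyze the local structure of $G$ at $x^*$. Since $F(x^*)=0$ we have $G(x^*+h)=\Delta F(x^*;h)$, and from the propagation rules \eqref{Eqn:TangentLinearization} one checks that for $h$ in a sufficiently small ball $B_{R_0}(x^*)$ the only absolute values producing a kink at $x^*$ are those whose switching value $\mathring z_i$ vanishes; every selection function active near $x^*$ therefore passes through the origin, so $h\mapsto G(x^*+h)$ is positively homogeneous of degree $1$ on $B_{R_0}(x^*)$. Coherent orientation forces every active linear part $A_i$ to be invertible, so $G(x^*+h)=A_ih=0$ implies $h=0$ on each piece; hence $x^*$ is an isolated zero of $G$ and there is a constant $c>0$ with $\min_{x\in\partial B_R(x^*)}\|G(x)\|=cR$ for all $R\le R_0$.

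Next I would compute the degree. For a regular value $y$ near $0$ the degree formula (property 1) gives $\deg(G,B_R(x^*),y)=\operatorname{sign}(\det A_i)\cdot\#\!\left(G^{-1}(y)\cap B_R(x^*)\right)$, which is nonzero as soon as $y$ lies in the image; since coherent orientation makes $G$ locally open (\cite[Prop.~2.3.7]{scholtespl}), a neighborhood of $0$ is attained, so the value-nearness property (property 2) yields $\deg(G,B_R(x^*),y)\ne0$ for every $y$ with $\|y\|<cR=:\rho$. I would set $r:=cR$. Applying the preceding Lemma with $\ry=x^*$ and $\{\cz,\hz\}=\{\cx,\hx\}$, together with $\|\cx-x^*\|,\|\hx-x^*\|\le(1-\al)R$ and $\|x-x^*\|\le R$ on $\overline{B_R(x^*)}$, bounds the deviation by
\[
\sup_{x\in\overline{B_R(x^*)}}\|H(x)-G(x)\|\le\ga_F\Bigl((1-\al)R^2+\tfrac12(1-\al)^2R^2\Bigr)\le\tfrac32\ga_F(1-\al)R^2 .
\]

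Finally I would transfer the nonvanishing. For $y\in B_{\al r}(0)$ and $x\in\partial B_R(x^*)$ the straight-line homotopy $H_t=(1-t)G+tH$ satisfies $\|H_t(x)-y\|\ge\|G(x)\|-\|y\|-\|H(x)-G(x)\|\ge(1-\al)\rho-\tfrac32\ga_F(1-\al)R^2$, which is strictly positive whenever $R<\tfrac{2c}{3\ga_F}$. Choosing $R$ below this threshold, below $R_0$, and below the distance from $x^*$ to $\partial\mathcal D$, the homotopy never meets $y$ on $\partial B_R(x^*)$, so homotopy invariance of the degree (\cite{ruiz2009deg}) gives $\deg(H,B_R(x^*),y)=\deg(G,B_R(x^*),y)\ne0$; hence $y\in H(B_R(x^*))\subset\lozenge_{\cx}^{\hx}F(\mathcal D)$, and since $y\in B_{\al r}(0)$ was arbitrary this is the claim. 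I expect the main obstacle to be the second step: proving that the linearization at $x^*$ is locally positively homogeneous and that the boundary values grow \emph{linearly} in $R$ (giving the gap $\rho=cR$), so that the quadratic deviation $\tfrac32\ga_F(1-\al)R^2$ can be beaten by shrinking $R$ while the $\al$-dependence on both sides cancels to a clean condition independent of $\al$.
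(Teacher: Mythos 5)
Your proof is correct, and at its core it runs the same argument as the paper's: bound the deviation $\|\lozenge_\cx^\hx F-\lozenge_{x^*}F\|$ via the refinement lemma with $\ry=x^*$ (so that the gap and the perturbation both carry the factor $(1-\al)$, making the smallness condition on $R$ independent of $\al$), run the straight-line homotopy between the two linearizations, and use degree invariance to conclude that every $y\in B_{\al r}(0)$ retains a preimage. The genuine difference lies in how the boundary lower bound $r$ is secured and on which domain the degree is computed. The paper makes no structural claim about $\lozenge_{x^*}F$: it fixes an auxiliary radius $\tilde R$ with $r=\operatorname{dist}[0,\lozenge_{x^*}F(\partial B_{\tilde R}(x^*))]>0$, shrinks the base-point radius separately to $R=\min(\tilde R,\,2r/(3\ga_F\tilde R))$, and --- since this $r$ need not scale with the radius --- trades the ball for the preimage set $\Om=(\lozenge_{x^*}F)^{-1}\left[B_r(0)\cap\lozenge_{x^*}F(B_{\tilde R}(x^*))\right]$, whose boundary values have norm exactly $r$, running the homotopy over $\Om$. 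You instead prove that the PL model is locally positively homogeneous at $x^*$, so that boundary values grow linearly, $\min_{x\in\partial B_R(x^*)}\|\lozenge_{x^*}F(x)\|=cR$ for $R\le R_0$, which lets you stay on plain balls, set $r=cR$, and obtain the clean threshold $R<2c/(3\ga_F)$. Your route buys explicit constants and dispenses with the bookkeeping attached to $\Om$ (openness, boundary norms, attainment of values); the paper's route avoids the homogeneity claim, which in your write-up is the one step you should state as a citable fact rather than derive loosely ``from the propagation rules'': it is the local structure theorem for piecewise affine maps --- every point has a neighborhood on which the function equals its value plus its positively homogeneous directional derivative, valid because there are only finitely many selection functions (cf. \cite{scholtespl}). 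A final small point, which applies equally to the paper's own proof: the listed degree properties concern regular values only, so to pass from nonzero degree to solvability for \emph{every} $y\in B_{\al r}(0)$ you should either invoke the existence property of the degree for arbitrary admissible values or approximate by regular values $y_k\to y$ and use compactness of $\overline{B_R(x^*)}$.
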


\begin{proof}
  Set $R_{\cal D}={\rm dist}(x^*,\bd{\cal D})$ and for any $R\in (0,R_{\cal D})$
  \[
  r=\rho(R)={\rm dist}[0,\lozenge_{x^*}F(\bd B_R(x^*))]
  \]
  By coherent orientation of $\lozenge_{x^*}F$, $x^*$ is isolated so that for $R>0$ small enough one will get $r>0$. 
  Increasing $R$ might bring the 
  sphere $\bd B_R(x^*)$ close to a different root, decreasing $r$ towards zero in consequence.
  Pick an $\tilde R>0$ with $r=\rho(\tilde R)>0$.
  Let the second order constant $\ga_F$ of $F$ be valid on $\cal D$ and set 
  \[
  R=\min\left(\tilde R,\frac{2r}{3\ga_F\tilde R}\right)
  \]
  We now show that this pair of $r$ and $R$ satisfies the claim of the lemma. To that end consider
  the set \[\Om\,\equiv\, (\lozenge_{x^*}F)^{-1}\left[B_r(0)\cap \lozenge_{x^*}F(B_{\tilde R}(x^*))\right]\] which is open as the coherent 
  orientation implies the openness of $\lozenge_{x^*}F$. By construction we find $\|\lozenge_{x^*}F(z)\|=r$
  for all $z\in\bd\Om$.
    
  Fix $\alpha\in(0,1)$ and $\cx,\hx\in B_{(1-\al)R}(x^*)$. Then for any $z\in\bd\Om$
  \begin{align}
  \|\lozenge_\cx^\hx F(x)-\lozenge_{x^*} F(x)\|
  &\le \ga_F\left( (1-\al)R\,\tilde R+\frac12 (1-\al)^2R^2\right)
  \notag\\
  &\le \frac32(1-\al)\ga_F\tilde R\,R\le (1-\al)r
  \end{align}
  For the linear homotopy $H_t=(1-t)\lozenge_{x^*} F+t\lozenge_\cx^\hx F$, $t\in[0,1]$, we thus know that none of the 
  pre-images $H_t^{-1}(B_{\al r}(0))$ will cross $\bd\Om$. Thus for any regular value $y\in B_{\al r}(0)$ the 
  Brouwer degree satisfies 
  \[
  \deg(H_t,\Om,y)=\deg(\lozenge_{x^*} F,\Om,y)\ne 0\;,
  \]
  which implies that at least one solution of $H_t(x)=y$ exists for any $y\in B_{\al r}(0)$ and $t\in[0,1]$. Thus
  \[
  B_{\al r}(0)
  \subset \lozenge_\cx^\hx F(\Om)
  \subset\lozenge_\cx^\hx F(B_{\tilde R}(x^*))
  \subset \lozenge_\cx^\hx F({\cal D})\;,
  \]
  which completes the proof.
\end{proof}
\begin{remark}\label{rem:regular-root}
	Lemma \ref{lem:local-surj} states that $0$ is contained in the image of the perturbed piecewise linear model. This does not mean that it is a regular value. In fact, an exponential 
	number [in $\mathcal O(2^s)$] of nonempty polyhedra may be mapped to $0$ by arbitrarily small perturbations of 
	the piecewise linearization at $x^*$. Hence, in the worst case one has to compute 
	the point on these polyhedra with minimal distance to the reference point of the
	current linearization. Further, even if $0$ is a regular value, arbitrarily small
	perturbations of the reference point of the piecewise linearization may add or remove 
	an exponential even number of roots [again in $\mathcal O(2^s)$], cf. Figure \ref{fig:pert}
\end{remark}
As the root $x^*$ is isolated, moving a small distance away from $x^*$ will increase the norm of
$\lozenge_{x^*} F(x)$ linearly in $\|x-x^*\|$ with some positive slope. This idea generalizes 
into the concept of metric regularity. 
\begin{definition}(Metric regularity \cite[p. 20]{mord2006varana})
 A locally Lipschitz continuous function $F:\R^n\rightarrow\R^m$ is called \textbf{metrically regular} at $x^*\in\R^n$ if there exist a constant $c>0$, and neighborhoods $\mathcal D\subset\R^n$ of $x^*$, and $\mathcal V\subset\R^m$ of $F(x^*)$ such that 
 \begin{align*}
  \lVert x-F^{-1}(y)\rVert \le c\lVert y-F(x)\rVert\qquad \forall\ x\in\mathcal D,\; y\in\mathcal V\; .
 \end{align*}
 We say \(F\colon\R^n\to\R^m\) is metrically regular on an open
neighborhood \(\mathcal{D}\subset\R^n\) if it is metrically regular at
all points \(x\in\mathcal{D}\).
\end{definition}
\begin{remark}
	Let $F:\R^n\rightarrow\R^n$ be piecewise affine and coherently oriented on a connected set $\mathcal D\subseteq\R^n$. Denote by $F_1,\dots, F_k$ the affine selection functions which are active on $\mathcal D$ and by $A_1,\dots, A_k$ their (by hypothesis invertible) linear parts. Then $F$ is metrically regular on $\mathcal D$ with associated constant 
	\[c\ :=\ \max\{\Vert A^{-1}_1\Vert,\dots,\Vert A^{-1}_k\Vert \}\; .\]
\end{remark}
This allows to further quantify the previous result and
strengthen its claim.

\begin{proposition}(Contractivity)\label{prop:contractivity} Let $F\in {\rm span}(\Phi_{\rm abs})$ and 
$x^*$ be an isolated root of $F$ in an open neighborhood ${\cal D}$ so that 
$\lozenge_{x^*}F$ is coherently oriented on ${\cal D}$. Let further $\lozenge_{x^*}F$ be 
strongly metrically regular on $B(x^*,\tilde R)\subset {\cal D}$, $\tilde R>0$,
in that for some $c>0$ and all $x\in B(x^*,\tilde R)$ we have
\[
\|x-x^*\|\le c\|\lozenge_{x^*}F(x)\|\,.
\]
Then for $$R=\min\left(\frac{\tilde R}3, \frac2{3c\ga_F}\right)$$
the Newton operator both in tangent and secant mode of the piecewise linearization 
 is contractive on $B(x^*,R)$.
\end{proposition}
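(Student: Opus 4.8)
The plan is to establish the secant-mode contraction in full and recover the tangent mode as the special case $\cx=\hx=\rx$. Write $w:=N(\rx)$ for the selected (minimal-distance) root, so that $\lozenge_\cx^\hx F(w)=0$, and abbreviate $d:=\max(\|\cx-x^*\|,\|\hx-x^*\|)$; since $\cx,\hx\in B(x^*,R)$ we have $\|\rx-x^*\|\le d<R$. The engine of the proof is a single estimate that applies to \emph{any} root $w$ of $\lozenge_\cx^\hx F$ lying in $B(x^*,\tilde R)$: I combine the assumed strong metric regularity of $\lozenge_{x^*}F$ with the refinement lemma. Because $\lozenge_{x^*}F(x^*)=F(x^*)=0$ and $\lozenge_\cx^\hx F(w)=0$, metric regularity gives
\[
\|w-x^*\|\le c\,\|\lozenge_{x^*}F(w)\|=c\,\|\lozenge_{x^*}F(w)-\lozenge_\cx^\hx F(w)\|,
\]
and the refinement lemma (applied with $\ry=x^*$, $\cz=\cx$, $\hz=\hx$, $x=w$) bounds the right-hand side by $c\ga_F\bigl(d\,\|w-x^*\|+\tfrac12\|\cx-x^*\|\,\|\hx-x^*\|\bigr)$.

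The choice $R\le 2/(3c\ga_F)$ forces the coefficient $c\ga_F d<\tfrac23$, so I can absorb the $\|w-x^*\|$ term onto the left and solve, obtaining
\[
\|w-x^*\|\ \le\ \frac{c\ga_F}{2(1-c\ga_F d)}\,\|\cx-x^*\|\,\|\hx-x^*\|\ <\ \tfrac32 c\ga_F\,\|\cx-x^*\|\,\|\hx-x^*\|\ \le\ \tfrac32 c\ga_F\, d^2 .
\]
Since $d<R\le 2/(3c\ga_F)$, the prefactor $\tfrac32 c\ga_F d<1$, whence $\|w-x^*\|<d$: exactly the asserted contraction, with an explicit factor strictly below one. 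In tangent mode ($\cx=\hx=\rx$) the same computation reads $\|w-x^*\|<\tfrac32 c\ga_F\|\rx-x^*\|^2$, i.e.\ a quadratic decrease. This part is essentially an algebraic rearrangement of two results already in hand.

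The genuinely delicate point — and the reason the definition of $R$ also carries the factor $\tilde R/3$ — is to certify that the \emph{selected} root $w=N(\rx)$ actually lies in $B(x^*,\tilde R)$, where metric regularity is available; Remark \ref{rem:regular-root} warns that $\lozenge_\cx^\hx F$ may possess an exponential number of roots scattered over its polyhedra, so minimality of the distance to $\rx$ does not by itself localise $w$. I would resolve this in two steps. First, the homotopy-degree argument of Lemma \ref{lem:local-surj} yields $0\in\lozenge_\cx^\hx F(\Om)$ for an open $\Om\subset B(x^*,\tilde R)$, hence at least one root $x'$ of $\lozenge_\cx^\hx F$ with $x'\in B(x^*,\tilde R)$; the estimate of the previous paragraph applies to $x'$ and gives $\|x'-x^*\|<d$. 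Second, minimality of $w$ gives $\|w-\rx\|\le\|x'-\rx\|\le\|x'-x^*\|+\|x^*-\rx\|<2d$, so that $\|w-x^*\|\le\|w-\rx\|+\|\rx-x^*\|<3d<3R\le\tilde R$. Thus $w\in B(x^*,\tilde R)$, the core estimate is legitimately applicable to $w$ itself, and the contraction follows.

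I expect this second step — pinning down the location of the arg-min root — to be the main obstacle, since it is where the many-roots pathology must be neutralised and where the constant $\tilde R/3$ earns its keep; by contrast the quantitative contraction factor is a routine consequence of metric regularity and the refinement lemma. A secondary point to check is that the root $x'$ supplied by Lemma \ref{lem:local-surj} indeed sits in $B(x^*,\tilde R)$ and not merely in $\mathcal D$, which I would justify from the injectivity of the coherently oriented model $\lozenge_{x^*}F$ that enters the construction of $\Om$.
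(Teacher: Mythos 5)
Your proof is correct and follows essentially the same route as the paper's: existence of a root inside $B(x^*,\tilde R)$ via Lemma \ref{lem:local-surj} with $r=\tilde R/c$ from metric regularity, the core estimate combining metric regularity with the refinement lemma (this is exactly the paper's inequality \eqref{eq:Newton:basic}, which you merely solve explicitly to exhibit the quadratic contraction factor $\tfrac32 c\ga_F$ that the paper only extracts later in its convergence theorem), and localization of the arg-min root via minimality, the triangle inequality and $R\le\tilde R/3$. The only cosmetic difference is that you pin down $N(\rx)$ by bounding $\|w-\rx\|<2d$ directly, where the paper compares distances of roots inside versus outside $B(x^*,\tilde R)$; both rest on the same reading of Lemma \ref{lem:local-surj} that the produced root lies in $B(x^*,\tilde R)$ (which follows from the construction of $\Om$ there, not from injectivity as your side remark suggests).
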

\begin{proof} 
 As for any $z\in\bd B_{\tilde R}(x^*)$ the piecewise linearization 
 has a lower bound \[c\cdot\|\lozenge_{x^*}F(x)\|\ \ge\ \tilde R\,,\]
 we obtain from the previous lemma that $\lozenge_\cx^\hx F(x)=0$ will have a
 root $x\in B_{\tilde R}(x^*)$ if the radii are chosen according to
 $r= c^{-1}\tilde R$, $R\le \min(\tilde R,\frac2{3c\ga_F})$ and $\cx,\hx\in B_R(x^*)$.
 
 For any root $x$ of $\lozenge_\cx^\hx F$ inside $B_{\tilde R}(x^*)$ we find further that
 by metric regularity and stability of the piecewise linear approximations
\begin{align}\label{eq:Newton:basic}
\|x-x^*\|
&\le c\|\lozenge_{x^*}F(x)\|
=c\|\lozenge_{x^*}^{x^*}F(x)-\lozenge_\cx^\hx F(x)\|\notag\\
&\le c\ga_F \left(\max(\|\cx-x^*\|,\|\hx-x^*\|)\|x-x^*\|)+\tfrac12|\hx-x^*\|\,\|\cx-x^*\|\right)\\
&\le \frac23 \left(\|x-x^*\| + \tfrac12 \max(\|\cx-x^*\|,\|\hx-x^*\|)\right)\notag
\end{align}
which implies $\|x-x^*\|\le \max(\|\cx-x^*\|,\|\hx-x^*\|)\le R$. Hence, the distance from this root $x$ to
$\cx$ or $\hx$ does not exceed $2R$, and any root of $\lozenge_\cx^\hx F$ outside
$B_{\tilde R}(x^*)$ has a distance greater $\tilde R-R$ from these basis points, 
so that for $R\le\frac13\tilde R$ the result of the Newton operator 
consists only of roots inside $B_R(x^*)$. Moreover, via 
\[
N(\cx,\hx)\subset B(x^*,\max(\|\cx-x^*\|,\|\hx-x^*\|))
\]
we obtain strict contractivity
\end{proof}
\begin{corollary}
	In the situation of Proposition \ref{prop:contractivity}, if $F$ is (globally) coherently oriented, i.e., if $\mathcal D=\R^n$, then \[R\ =\ \frac2{3c\ga_F} \,.\]
\end{corollary}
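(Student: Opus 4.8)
The plan is to read off the corollary from the proof of Proposition~\ref{prop:contractivity}, noticing that the radius there is a minimum of two quantities and that the first one, $\tilde R/3$, is merely an artifact of having assumed strong metric regularity only on the bounded ball $B_{\tilde R}(x^*)$. Its sole role in that proof was to guarantee that the $\arg\min$ defining the Newton operator does not select a spurious root of $\lozenge_\cx^\hx F$ lying outside $B_{\tilde R}(x^*)$, where the estimate \eqref{eq:Newton:basic} is unavailable. Under global coherent orientation that estimate becomes available on all of $\R^n$, so the $\tilde R/3$ term can be dropped and only $R=\frac{2}{3c\ga_F}$ survives.

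First I would upgrade the strong metric regularity to a global bound. When $\mathcal D=\R^n$ the \pl map $G\equiv\lozenge_{x^*}F$ is coherently oriented on all of $\R^n$, hence a homeomorphism of $\R^n$ onto itself (a standard fact for coherently oriented \pl functions, cf.\ \cite{scholtespl}), with the isolated root $x^*$ as the unique preimage of $0$. Its inverse is again piecewise linear, carrying the inverses $A_i^{-1}$ of the selection linear parts as its own linear parts, so it is globally Lipschitz with modulus $c=\max_i\|A_i^{-1}\|$, precisely the constant of the metric-regularity remark above. Consequently
\[
\|x-x^*\|=\|G^{-1}(G(x))-G^{-1}(0)\|\le c\,\|G(x)\|=c\,\|\lozenge_{x^*}F(x)\|\qquad\text{for every }x\in\R^n,
\]
that is, the strong metric regularity hypothesis of Proposition~\ref{prop:contractivity} holds with $\tilde R$ arbitrarily large.

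Next I would re-run the chain \eqref{eq:Newton:basic} without the restriction to $B_{\tilde R}(x^*)$. Fixing $\cx,\hx\in B_R(x^*)$ with $R=\frac{2}{3c\ga_F}$, Lemma~\ref{lem:local-surj} supplies at least one root of the perturbed model, and for \emph{any} root $x$ the now globally valid estimate gives
\[
\|x-x^*\|\le c\ga_F\Bigl(\max(\|\cx-x^*\|,\|\hx-x^*\|)\,\|x-x^*\|+\tfrac12\|\hx-x^*\|\,\|\cx-x^*\|\Bigr)\le\tfrac23\Bigl(\|x-x^*\|+\tfrac12\max(\|\cx-x^*\|,\|\hx-x^*\|)\Bigr),
\]
whence $\|x-x^*\|\le\max(\|\cx-x^*\|,\|\hx-x^*\|)\le R$. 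Thus \emph{every} root of $\lozenge_\cx^\hx F$ already lies in $B_R(x^*)$; there are no distant roots to exclude, the selection made by the Newton operator stays inside $B_R(x^*)$, and strict contractivity follows exactly as in Proposition~\ref{prop:contractivity}. Since the constraint $R\le\tilde R/3$ is now vacuous, the largest admissible radius is $R=\frac{2}{3c\ga_F}$.

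I expect the main obstacle to be the second paragraph: one must be careful that global coherent orientation really promotes the \emph{local} metric-regularity statement to a \emph{global} bound valid for all $x$ (in particular when $x$ and $x^*$ lie in different polyhedra), since this is exactly what licenses applying \eqref{eq:Newton:basic} to arbitrary, possibly far-away, roots of the perturbed model. Once that global bound is in hand, the rest is a verbatim transcription of the proof of Proposition~\ref{prop:contractivity} in the limit $\tilde R\to\infty$.
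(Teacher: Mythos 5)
Your reading of where the corollary comes from is essentially right, and it matches the paper's (unwritten) argument: in the proof of Proposition \ref{prop:contractivity} the radius is $R=\min\bigl(\tilde R/3,\tfrac{2}{3c\ga_F}\bigr)$, the term $\tilde R/3$ serves only to prevent the $\arg\min$ from selecting a root outside the region where \eqref{eq:Newton:basic} is valid, and once the strong metric regularity bound is available with $\tilde R$ arbitrarily large, only $\tfrac{2}{3c\ga_F}$ survives.

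However, your second paragraph contains a genuine error. You claim, as ``a standard fact,'' that a coherently oriented \pl map $\R^n\to\R^n$ is a homeomorphism. It is not: coherent orientation is equivalent to openness and implies surjectivity, but there exist coherently oriented \pl maps of degree $k$ with $|k|>1$ (e.g.\ a \pl angle-doubling map in the plane); these are branched coverings, not injections. The paper itself is explicit about this: Lemma \ref{lem:degree-one} characterizes \pl homeomorphisms as maps that are coherently oriented \emph{of degree $\pm1$}, and the discussion preceding it names the branched-covering case. Consequently the inverse $(\lozenge_{x^*}F)^{-1}$ you invoke need not exist, and the global bound $\|x-x^*\|\le c\,\|\lozenge_{x^*}F(x)\|$ does not follow from global coherent orientation alone; it actually fails whenever $\lozenge_{x^*}F$ has a second root $x'\ne x^*$, since near $x'$ the right-hand side tends to zero while the left-hand side stays bounded below. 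There are two ways to close the gap: (i) read the corollary as the paper does, namely that when $\mathcal D=\R^n$ the strong metric regularity hypothesis of Proposition \ref{prop:contractivity} is assumed with $\tilde R$ arbitrarily large, so that $R=\min\bigl(\tilde R/3,\tfrac{2}{3c\ga_F}\bigr)=\tfrac{2}{3c\ga_F}$ is immediate; or (ii) strengthen your assumption from coherent orientation to coherent orientation of degree $\pm1$ (equivalently, bijectivity, by Lemma \ref{lem:degree-one}); then $(\lozenge_{x^*}F)^{-1}$ is indeed \pl with linear pieces among the $A_i^{-1}$, your global Lipschitz bound is legitimate, and the remainder of your argument goes through verbatim.
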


\begin{theorem}(Quadratic or super-linear convergence)
Let $F\in {\rm span}(\Phi_{\rm abs})$ and $x^*$ be an isolated root of $F$ 
in an open neighborhood ${\cal D}$. If $\lozenge_{x^*}F$ is coherently oriented 
on ${\cal D}$, then there exist a radius $R>0$ so that for any initial point(s) 
$x_0\,(,x_1)\in B(x^*,R)$ the Newton iteration
$$
x_{j+1}\in N(x_j)
$$
in tangent mode  converges quadratically resp.
$$
x_{j+1}\in N(x_j,x_{j-1})
$$
in secant mode converges with order $\frac{1+\sqrt5}2$ 
towards $x^*$.
\end{theorem}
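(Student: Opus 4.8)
The plan is to reduce the statement to Proposition~\ref{prop:contractivity} and then to read off the order of convergence from the error recurrence encoded in estimate~\eqref{eq:Newton:basic}. First I would observe that coherent orientation of $\lozenge_{x^*}F$ already delivers the strong metric regularity hypothesis of Proposition~\ref{prop:contractivity}: since the active selection functions of the piecewise affine map $\lozenge_{x^*}F$ have invertible linear parts $A_1,\dots,A_k$, the Remark following the definition of metric regularity furnishes a constant $c=\max_i\|A_i^{-1}\|$ with the lower bound $\|x-x^*\|\le c\,\|\lozenge_{x^*}F(x)\|$ on a ball $B(x^*,\tilde R)\subset\mathcal D$ (using that $x^*$ is the unique root there by coherent orientation, equivalently local openness). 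Hence Proposition~\ref{prop:contractivity} applies and yields a radius $R>0$ on which the Newton operator is well defined and contractive, so that any iteration started in $B(x^*,R)$ remains in $B(x^*,R)$ and the errors $e_j\equiv\|x_j-x^*\|$ tend to $0$.

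Next I would isolate the quantitative recurrence. Writing $\cx,\hx$ for the two reference points of the current step and $e_x$ for the error of the resulting iterate $x$, estimate~\eqref{eq:Newton:basic} gives
\[
e_x\ \le\ c\,\ga_F\Bigl(\max(e_\cx,e_\hx)\,e_x+\tfrac12\,e_\cx\,e_\hx\Bigr).
\]
After one further reduction of $R$ to enforce $c\,\ga_F R\le\tfrac12$ on $B(x^*,R)$, the factor $1-c\,\ga_F\max(e_\cx,e_\hx)$ stays $\ge\tfrac12$, and the inequality rearranges to the clean product bound $e_x\le C\,e_\cx\,e_\hx$ with $C\equiv c\,\ga_F$. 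For the tangent mode one sets $\cx=\hx=x_j$, so that $e_{j+1}\le C\,e_j^{2}$; since the same smallness condition gives $C\,e_j\le\tfrac12<1$, iterating this bound is precisely quadratic convergence of $x_j\to x^*$.

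For the secant mode one sets $\cx=x_{j-1}$, $\hx=x_j$, which produces the two-step recurrence $e_{j+1}\le C\,e_{j-1}e_j$. Passing to the scaled errors $\epsilon_j\equiv C\,e_j$ turns this into $\epsilon_{j+1}\le\epsilon_{j-1}\,\epsilon_j$, and the choice of $R$ guarantees $\epsilon_0,\epsilon_1\le\tfrac12<1$. Taking logarithms and setting $a_j\equiv-\log\epsilon_j>0$, this becomes the Fibonacci-type inequality $a_{j+1}\ge a_{j-1}+a_j$, so the $a_j$ grow at least as fast as the Fibonacci numbers and the guaranteed order of convergence is governed by the dominant root of the characteristic equation $p^{2}=p+1$, namely $p=\tfrac{1+\sqrt5}{2}$. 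I expect this last step to be the main obstacle: one must argue carefully that the \emph{inequality} recurrence (not an equality) still forces the claimed order, which follows from the comparison $a_j\ge \mathrm{const}\cdot\bigl(\tfrac{1+\sqrt5}{2}\bigr)^{j}$ valid for any sequence bounded below by the Fibonacci recursion. The remaining work — checking that a single radius $R$ simultaneously satisfies the hypotheses of Proposition~\ref{prop:contractivity} and the smallness requirement $c\,\ga_F R\le\tfrac12$, and that both secant iterates stay inside $B(x^*,\tilde R)$ so that metric regularity remains available — is routine bookkeeping.
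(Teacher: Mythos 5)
Your proposal is correct and follows essentially the same route as the paper: reduce to Proposition~\ref{prop:contractivity}, rearrange inequality~\eqref{eq:Newton:basic} into a product bound $e_{j+1}\le C\,e_\cx e_\hx$ (the paper gets $\tfrac32 c\ga_F\le\tfrac1R$ where you get $C=c\ga_F$ under $c\ga_F R\le\tfrac12$ --- a cosmetic difference), and read off the orders from the resulting recurrences. You merely spell out what the paper compresses into ``follows directly,'' namely the verification that coherent orientation supplies the metric regularity constant and the logarithmic Fibonacci comparison for the secant order.
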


\begin{proof} Using the same constants as in the previous proof,
inequality \eqref{eq:Newton:basic} can be also transformed into 
\begin{align*}
 \|x_{j+1}-x^*\|&\le\frac32c\ga_F \|\cx-x^*\|\,\|\hx-x^*\|\le \frac1R\|\cx-x^*\|\,\|\hx-x^*\|
\end{align*}
where $(\cx,\hx)=(x_j,x_j)$ in tangent mode and $=(c_j,c_{j-1})$ in secant mode. The claim about the 
order of convergence follows directly.
\end{proof}

\begin{proposition}(Sufficient condition for convergence)\label{prop:sufficient}
	Let $F\in {\rm span}(\Phi_{\rm abs})$ be metrically regular at $x^*$. Then $\lozenge_{x^*}F$ is open in some neighborhood of $x^*$.
\end{proposition}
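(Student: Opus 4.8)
My plan is to prove that the piecewise linear model $H:=\lozenge_{x^*}F$ itself inherits metric regularity at $x^*$ from $F$, and then to exploit the conical structure of $H$ near $x^*$ to upgrade metric regularity at the single point $x^*$ into coherent orientation on a whole neighborhood, which by \cite[Prop. 2.3.7]{scholtespl} is exactly the openness asserted in the statement. The conceptual bridge is the standard equivalence between metric regularity and openness at a linear rate, together with the stability of metric regularity under Lipschitzian perturbations of sufficiently small modulus (a Lyusternik--Graves type theorem); both may be quoted from \cite{mord2006varana}. I prefer this route over a direct homotopy/degree argument in the spirit of Lemma \ref{lem:local-surj}, because mere metric regularity (as opposed to the \emph{strong} metric regularity used in Proposition \ref{prop:contractivity}) does not supply the pointwise lower bound $\|F(x)-F(x^*)\|\gtrsim\|x-x^*\|$ on spheres that a clean degree-invariance homotopy would require; the perturbation approach only needs first-order data.

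First I would write $H=F+g$ with the perturbation $g:=\lozenge_{x^*}F-F$. By the second-order estimate of Proposition \ref{prop:approx} we have $g(x^*)=0$ and $\|g(x)\|\le\tfrac12\ga_F\|x-x^*\|^2$ on a convex neighborhood $K$ of $x^*$. The decisive quantitative input is that the \emph{strict} Lipschitz modulus of $g$ at $x^*$ vanishes, i.e. $\operatorname{lip}(g;x^*)=\limsup_{x',x''\to x^*}\|g(x')-g(x'')\|/\|x'-x''\|=0$. Granting this, I apply the perturbation theorem: since $F$ is metrically regular at $x^*$ with some modulus $c$ and $\operatorname{lip}(g;x^*)\cdot c=0<1$, the sum $H=F+g$ is metrically regular at $x^*$ with the same modulus $c$, hence open at a linear rate about $x^*$.

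It remains to pass from openness at the point $x^*$ to openness on a neighborhood, which is where the piecewise linear structure of $H$ is used. On a ball $B_\rho(x^*)$ with $\rho$ small enough that every switching variable $z_i$ with $z_i(x^*)\neq 0$ keeps its sign, the recentred model $u\mapsto H(x^*+u)-H(x^*)$ is piecewise linear and positively homogeneous of degree one: the only surviving nonlinearities come from the absolute-value elementals whose arguments vanish at $x^*$, and their associated kink hyperplanes pass through $x^*$. For such a positively homogeneous piecewise linear map, openness at $0$ is equivalent to openness everywhere and, by \cite[Prop. 2.3.7]{scholtespl}, to coherent orientation. Since linear openness already gives openness of $H$ at $x^*$, I conclude coherent orientation, and hence openness, of $\lozenge_{x^*}F$ on the neighborhood $B_\rho(x^*)$, which is the claim.

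The main obstacle is the modulus computation $\operatorname{lip}(g;x^*)=0$, since Proposition \ref{prop:approx} controls the \emph{values} of $g$ rather than its difference quotients. I would establish it by a cell-wise argument that exploits the same conical picture: near $x^*$ the polyhedral subdivision underlying $H$ is conical, so for $\rho$ small every active cell $P$ has $x^*\in\bar P$, and on each such $P$ the affine part of $H$ coincides with the Jacobian at $x^*$ of the corresponding smooth selection $F_P$ of $F$. Consequently $\nabla(g|_P)(x)=\nabla F_P(x^*)-\nabla F_P(x)=O(\|x-x^*\|)$ by smoothness of $F_P$, so $g$ has Lipschitz constant $O(\delta)$ on $B_\delta(x^*)\cap P$; as there are finitely many cells and $g$ is continuous across their shared faces, the Lipschitz constant of $g$ on $B_\delta(x^*)$ is $O(\delta)\to 0$. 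Marshalling this subdivision and verifying that the kinks of $H$ and the nonsmoothness of $F$ are matched to first order at $x^*$ (so that the affine parts really are the selection Jacobians) is the delicate bookkeeping step; once it is in place, the abstract machinery quoted above closes the argument.
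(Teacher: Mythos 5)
Your reduction to a Lyusternik--Graves perturbation argument breaks down at exactly the step you flag as decisive: the claim $\operatorname{lip}(g;x^*)=0$ for $g=\lozenge_{x^*}F-F$ is false in general. Proposition \ref{prop:approx} makes $g$ second-order small \emph{in value}, but not in slope: the kink surfaces of $F$ are curved while those of $\lozenge_{x^*}F$ are flat, and they agree only to first order at $x^*$. In the second-order wedge between a curved kink of $F$ and its flattened image, your cell-wise bookkeeping fails, because a polyhedral cell $P$ of the linearization is \emph{not} contained in a single smooth piece of $F$; on part of $P$ one has $\nabla g=\nabla(\text{linearization of selection }i)-\nabla F_j$ with $j\neq i$, which is of the order of the gradient jump of $F$ across the kink, not $O(\|x-x^*\|)$. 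Concretely, take $F(x,y)=\bigl(2y-|y-x^2|,\,x\bigr)$ with $x^*=(0,0)$: every element of the Clarke Jacobian near the origin has determinant in $[-3,-1]$, so $F$ is metrically regular at $x^*$ with modulus $c\ge 1$, and $\lozenge_{x^*}F(x,y)=(2y-|y|,\,x)$, so $g_1(x,y)=|y-x^2|-|y|$. Then $g_1(t,0)=t^2$ and $g_1(t,t^2)=-t^2$, so the difference quotient between $(t,0)$ and $(t,t^2)$ equals $2$ for every $t\neq0$, i.e.\ $\operatorname{lip}(g;x^*)\ge 2$ and $c\cdot\operatorname{lip}(g;x^*)\ge 2>1$ no matter how small the neighborhood. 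Hence Lyusternik--Graves is inapplicable; this is a structural obstruction, not repairable bookkeeping. A secondary soft spot: for a positively homogeneous piecewise linear map, openness \emph{at the single point} $0$ does not imply coherent orientation (a map wrapping the plane around once while folding one cone back is open at $0$ but not open on any neighborhood of $0$), so your final paragraph would in any case need metric regularity in the robust ``around'' sense rather than pointwise linear openness --- though this is moot given the failure above.

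The paper's proof avoids perturbation theory entirely and uses precisely the corrected form of the structural correspondence you were reaching for: by \cite[Thm. 2.4]{Fusek2013OnMR}, metric regularity of $F$ at $x^*$ forces the limiting Jacobians of $F$ at $x^*$ to be coherently oriented; by \cite{griewank2013stable}, the linear parts of the selection functions of $\lozenge_{x^*}F$ that are active at $x^*$ form a \emph{subset} of those limiting Jacobians; hence $\lozenge_{x^*}F$ is coherently oriented near $x^*$ and therefore open there by \cite[Prop. 2.3.7]{scholtespl}. Note where the two correspondences differ: the subset property holds for the Jacobians \emph{at} the development point $x^*$, while your argument needs a cell-by-cell identification on a whole neighborhood, and that stronger statement is exactly what the curved-versus-flat kink mismatch destroys.
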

\begin{proof}
	If $F$ is metrically regular at $x^*$, the limiting Jacobians at $x^*$ are coherently oriented \cite[Thm. 2.4]{Fusek2013OnMR}. But it was shown in \cite{griewank2013stable} that the linear parts of the selection functions of $\lozenge_{x^*}F$ which are active at its development point $x^*$ are a subset of the limiting Jacobians of $F$ at $x^*$. Hence,\,$\lozenge_{x^*}F$ is coherently oriented and thus open in some neighborhood of $x^*$. 
\end{proof}
\begin{remark}(Open Problem)\label{rem:open-problem}
The argumentation of the proof of Proposition \ref{prop:sufficient} especially implies that if $F$ is metrically regular at $x^*$, there exists a neighborhood of $x^*$ such that all piecewise linearizations developed therein are locally coherently oriented in some neighborhood about their respective reference points. 
It is not clear, however, whether the intersection of these neighborhoods again contains a nonempty neighborhood about $x^*$. In this case all piecewise linearizations developed sufficiently close to $x^*$ would (locally) have the same number of isolated roots whose movements could then be tracked unambiguously since their respective ranges of motion would be bounded by the Lipschitz constants in Proposition \ref{prop:approx}.
 It was already noted in Remark \ref{rem:regular-root} that in general arbitrarily small perturbations of the reference point(s) of a piecewise linearization may cause an even number of $\mathcal O(2^s)$ additional roots to arise, leading to a potentially hard computation of the Newton sequence.
	\end{remark}

\begin{proposition}(Necessary condition for open linearization)
	Let $F\in {\rm span}(\Phi_{\rm abs})$ and assume that $F(x^*)=0$. Then, if $\lozenge_{x^*}F$ is metrically regular in some neighborhood about $x^*$, $F$ is open in $x^*$.
\end{proposition}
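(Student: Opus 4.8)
The plan is to first upgrade the metric regularity hypothesis on the piecewise linear model $\lozenge_{x^*}F$ into the much more usable property of (local) coherent orientation, and then to transfer the resulting linear covering estimate from $\lozenge_{x^*}F$ to $F$ itself by a Rouch\'e-type mapping degree argument, exactly as in the proof of Lemma \ref{lem:local-surj}, with the perturbation bound now supplied by the second order estimate of Proposition \ref{prop:approx}. Throughout I would treat the square case $m=n$, which is the relevant setting in which coherent orientation and the Brouwer degree are available.

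For the reduction I would argue as follows. Metric regularity of $\lozenge_{x^*}F$ on a neighborhood of $x^*$ is equivalent to the linear openness (covering) of $\lozenge_{x^*}F$ there, cf. \cite{mord2006varana}. Since $\lozenge_{x^*}F$ is piecewise linear, local openness is in turn equivalent to local coherent orientation by \cite[Prop. 2.3.7]{scholtespl}. A coherently oriented piecewise linear map is a local homeomorphism, so $x^*$ is an \emph{isolated} zero of $\lozenge_{x^*}F$ and the distance-to-fibre inequality of metric regularity collapses, at $y=0$, to a genuine pointwise lower bound
\[
\|\lozenge_{x^*}F(x)\|\ \ge\ \kappa\,\|x-x^*\|\qquad(\kappa=1/c>0)
\]
for all $x$ in some ball $B_{\tilde R}(x^*)$. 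In particular $\|\lozenge_{x^*}F(z)\|\ge\kappa t$ for every $z\in\bd B_t(x^*)$ and $t\le\tilde R$, and, $\lozenge_{x^*}F$ being a local homeomorphism, $\deg(\lozenge_{x^*}F,B_t(x^*),0)=\pm1$.

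It then remains to transfer this to $F$. On the sphere $\bd B_t(x^*)$ the tangent estimate of Proposition \ref{prop:approx} gives $\|F(z)-\lozenge_{x^*}F(z)\|\le\tfrac12\ga_F t^2$, so for the linear homotopy $H_s=(1-s)\lozenge_{x^*}F+sF$, $s\in[0,1]$, and any $y$ with $\|y\|<\kappa t-\tfrac12\ga_F t^2$ we have $\|H_s(z)-y\|\ge\kappa t-\tfrac12\ga_F t^2-\|y\|>0$ on $\bd B_t(x^*)$. Choosing $t<2\kappa/\ga_F$ makes this radius positive, and homotopy invariance together with the nearness property of the degree yields $\deg(F,B_t(x^*),y)=\deg(\lozenge_{x^*}F,B_t(x^*),0)=\pm1\ne0$; hence every such $y$ lies in $F(B_t(x^*))$. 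Thus $B_{\rho}(0)\subset F(B_t(x^*))$ with $\rho=\kappa t-\tfrac12\ga_F t^2>0$, and since $t$ can be taken arbitrarily small this is precisely openness of $F$ at $x^*$.

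The hard part is the first step: promoting the abstract metric regularity inequality, which only controls the distance of $x$ to the whole zero fibre $(\lozenge_{x^*}F)^{-1}(0)$, into the uniform pointwise bound $\|\lozenge_{x^*}F(x)\|\ge\kappa\|x-x^*\|$ on spheres that the degree transfer requires. This hinges on knowing that the fibre over $0$ is the single point $x^*$, i.e. on the local injectivity of the piecewise linear model, which is exactly what coherent orientation provides; without it the lower bound could degrade to sublinear order and the comparison with the quadratic remainder $\tfrac12\ga_F t^2$ would break down. The remaining degree bookkeeping is routine and mirrors Lemma \ref{lem:local-surj}.
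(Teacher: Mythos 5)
Your proposal is correct in its overall structure, but it takes a genuinely different route from the paper. The paper does not pass through coherent orientation or degree theory at all in this proof: it mimics the classical implicit-function-theorem argument, defining the operator $T(x)=x+x^*-(\lozenge_{x^*}F)^{-1}(F(x)-y)$ on a small ball $B(x^*,R)$, using the metric regularity constant $c$ of $\lozenge_{x^*}F$ together with the quadratic approximation bound of Proposition \ref{prop:approx} to show that $T$ maps $B(x^*,R)$ into itself whenever $\|y\|\le R/(2c)$, and then invoking Brouwer's fixed-point theorem to produce $x$ with $F(x)=y$; a final estimate sharpens this to $B(0,\tfrac{\delta}{2c})\subset F(B(x^*,\delta))$ for small $\delta$. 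Your Rouch\'e-type homotopy transfer of the degree, patterned on Lemma \ref{lem:local-surj}, buys something real: it sidesteps the selection issue that the paper's proof silently skips (the ``inverse'' $(\lozenge_{x^*}F)^{-1}$ is set-valued, and Brouwer needs a continuous single-valued $T$), and it reuses machinery the paper has already set up. The price is that your argument is confined to the square case $m=n$, where Brouwer degree and coherent orientation make sense, whereas the fixed-point argument does not care about the dimensions; in the Newton context of the paper this restriction is harmless.

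There is, however, one false supporting claim in your reduction: coherent orientation of a piecewise linear map does \emph{not} make it a local homeomorphism. Coherent orientation is equivalent to openness, but open PL maps can be branched coverings of local degree $k\ge 2$ --- e.g.\ the PL analogue of $z\mapsto z^2$ obtained by mapping four quadrants linearly onto alternating half-planes is coherently oriented yet $2$-to-$1$ near the origin; this is precisely why Lemma \ref{lem:degree-one} must add the hypothesis ``degree $\pm 1$'' before concluding homeomorphy. Consequently you may conclude neither local injectivity nor that $\deg(\lozenge_{x^*}F,B_t(x^*),0)=\pm 1$. Fortunately your argument never needs either. What it needs is: (i) $x^*$ is isolated in the fibre $(\lozenge_{x^*}F)^{-1}(0)$, which holds because coherent orientation forces every selection function's linear part to be invertible, so fibres are discrete (finite on compact sets); (ii) the pointwise bound $\|\lozenge_{x^*}F(x)\|\ge\kappa\|x-x^*\|$, which follows from the metric-regularity inequality at $y=0$ once the ball is shrunk to radius at most half the distance from $x^*$ to the rest of the fibre, so that $x^*$ is the nearest zero to every $x$ in the ball; and (iii) $\deg(\lozenge_{x^*}F,B_t(x^*),0)\ne 0$, which holds because a nearby regular value has at least one preimage in $B_t(x^*)$ (by openness) and all preimages contribute the same determinant sign. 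With ``local homeomorphism / degree $\pm1$ / single-point fibre'' weakened to ``discrete fibre / nonzero degree'', which is all that coherent orientation actually provides, your proof goes through.
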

The proof is modeled after the standard proof for the implicit function theorem.
\begin{proof}
	Consider the fixed-point operator
	\[
	T(x)=x+x^*-(\lozenge_{x^*}F)^{-1}(F(x)-y)
	\]
	where the inverse is constrained to a suitably small neighborhood $B(x^*,\bar R)$ of $x^*$.
	By metric regularity, we have
	\begin{align}\label{eq:open:contractivity}
	\|T(x)-x^*\|\ &=\ \|x-(\lozenge_{x^*}F)^{-1}(F(x)-y)\|  \\
	& \le\  c\|\lozenge_{x^*}F(x)-F(x)+y\|  \\ \nonumber
	&\le\  c\frac{\ga_F}{2}\|x-x^*\|+c\|y\|\,. \nonumber
	\end{align}
	
	Set $R=\min\left(\bar R,\frac1{c\ga_F}\right)$. Then, if $\|y\|\le \frac{R}{2c}$, the 
	map $T$ maps $B(x^*,R)$ into itself. By Brouwer's fixed-point theorem,
	$T$ has a fixed point in $B(x^*,R)$, that is, $F(x)=y$.
	Re-evaluating \eqref{eq:open:contractivity}
	and using $c\ga_FR\le 1$, for the latter solution we get
		\begin{align}\label{eq:implicit}
			2R\|x-x^*\|\le \|x-x^*\|^2+2Rc\|y\|\le R\|x-x^*\|+2Rc\|y\|
			\implies \|x-x^*\|\le 2c\|y\|\,.
		\end{align}
	Re-inserting into \eqref{eq:implicit} the tighter bound 
	\[
	\|x-x^*\|\le \frac{c\|y\|}{1-\frac cR\|y\|}
	\]
	we obtain $B(0,\frac{\delta}{2c})\subset B(x^*,\delta)$ for all sufficiently small $\delta$.	
\end{proof}

\subsection{Counterexample}\label{sec:counterexamples}

For $0<x<y^2$ the following example function has a singular Jacobian. In this case \ssn steps are not defined because this especially means that in some neighborhood of the (unique) root no generalized Jacobian contains a nonsingular element.  

	\begin{align}
		\underset{(x, y) \in \R^2}{\mathrm{solve}:}\quad
		\begin{bmatrix}
			0 \\
			0
		\end{bmatrix} \overset !=
		f(x, y) &= 
		\begin{bmatrix}
			x + (y^2 - x_+)_+ \\
			y
		\end{bmatrix} = 
		\begin{cases}
			(x + y^2, y)^\top &\text{if } x \le 0 \\
			(y^2, y)^\top &\text{if } 0 < x \le y^2 \\
			(x, y)^\top &\text{else}
		\end{cases} \\
		\partial f(x, y) &= \begin{cases}
			\begin{bmatrix} 1 & 2y \\ 0 & 1 \end{bmatrix} &\text{if } x < 0 \\
			\begin{bmatrix} 0 & 2y \\ 0 & 1 \end{bmatrix} &\text{if } 0 < x < y^2 \\
			I_{2\times 2} &\text{if } y^2 < x 
		\end{cases}
		\label{eqn:KummerGriewankExample}
	\end{align}
However, one can easily check that the tangent mode piecewise linearization at the origin, which is also the root, is simply the identity. 
This is a surprising fact, given that singular matrices are contained within the set of all limiting Jacobians of \eqref{eqn:KummerGriewankExample} and that the set of all limiting Jacobians of its piecewise linearization at the root is a subset of the former. 
The reason is that all kinks in this example are linearized during the piecewise linearization process. The linerization of the parabola \(\{ (x, x^2) \mid x \in \R \}\) at the origin \((0, 0)\) equals the other kink \(\{(x, 0) \mid x \in \R\}\) which itself is kept invariant. 
Thus the union of the nonlinear sets \( \{ (x, y) \mid 0 < x < y^2 \} \) for \(y < 0\) and \(y > 0\), on which the limiting Jacobians are singular, is mapped onto empty polyhedra. 
This removes the singular matrices from the set of all limiting Jacobians of the piecewise linearization at the root, which is then trivially coherently oriented and so both generalized Newton's methods converge. 


\begin{figure}[htp]
	\centering
	\subfigure[First component of the non-linear function ]{\includegraphics[scale=.5]{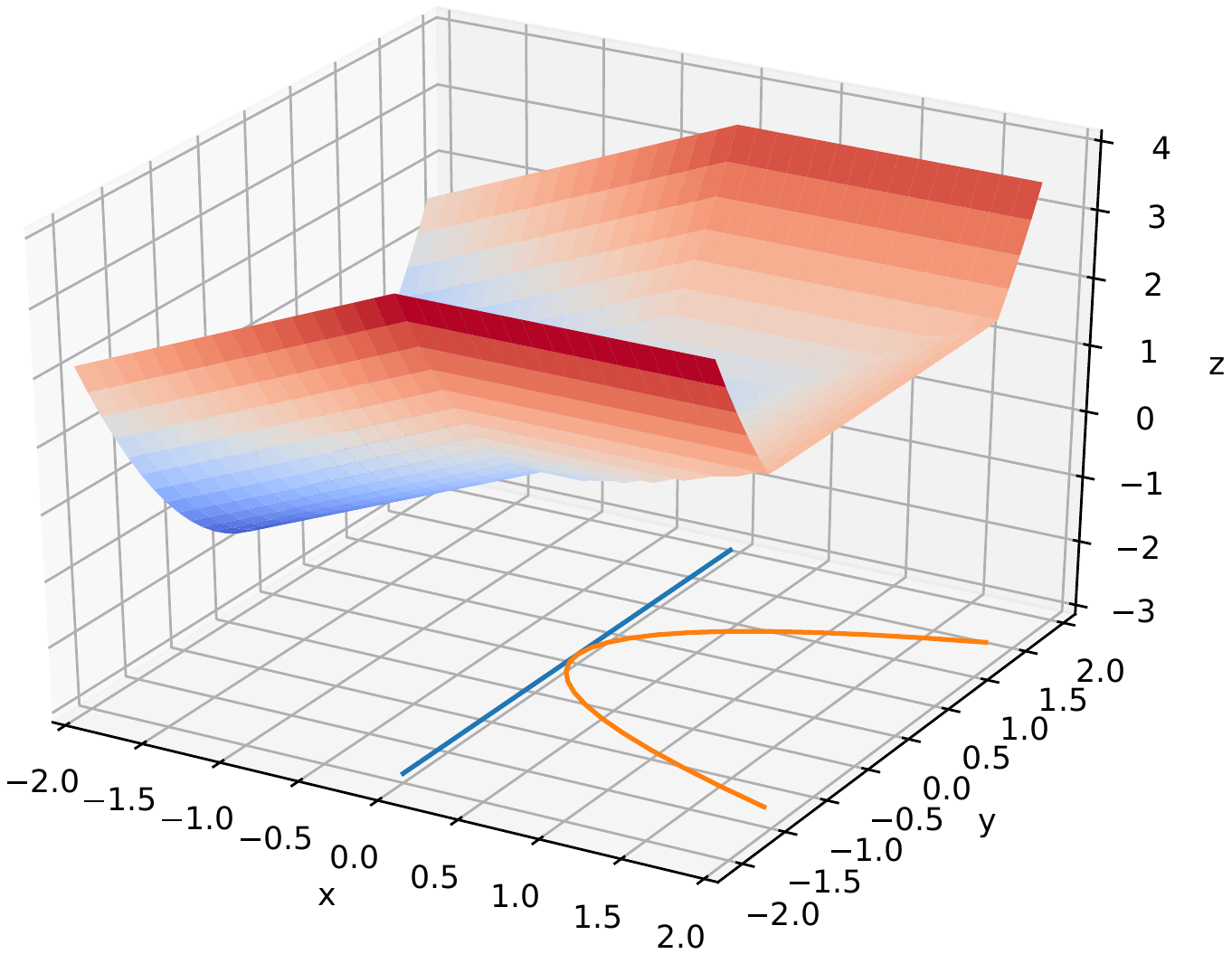}}
	\hspace{.2cm}
	\subfigure[First component of the PL approximation]{\includegraphics[scale=.59]{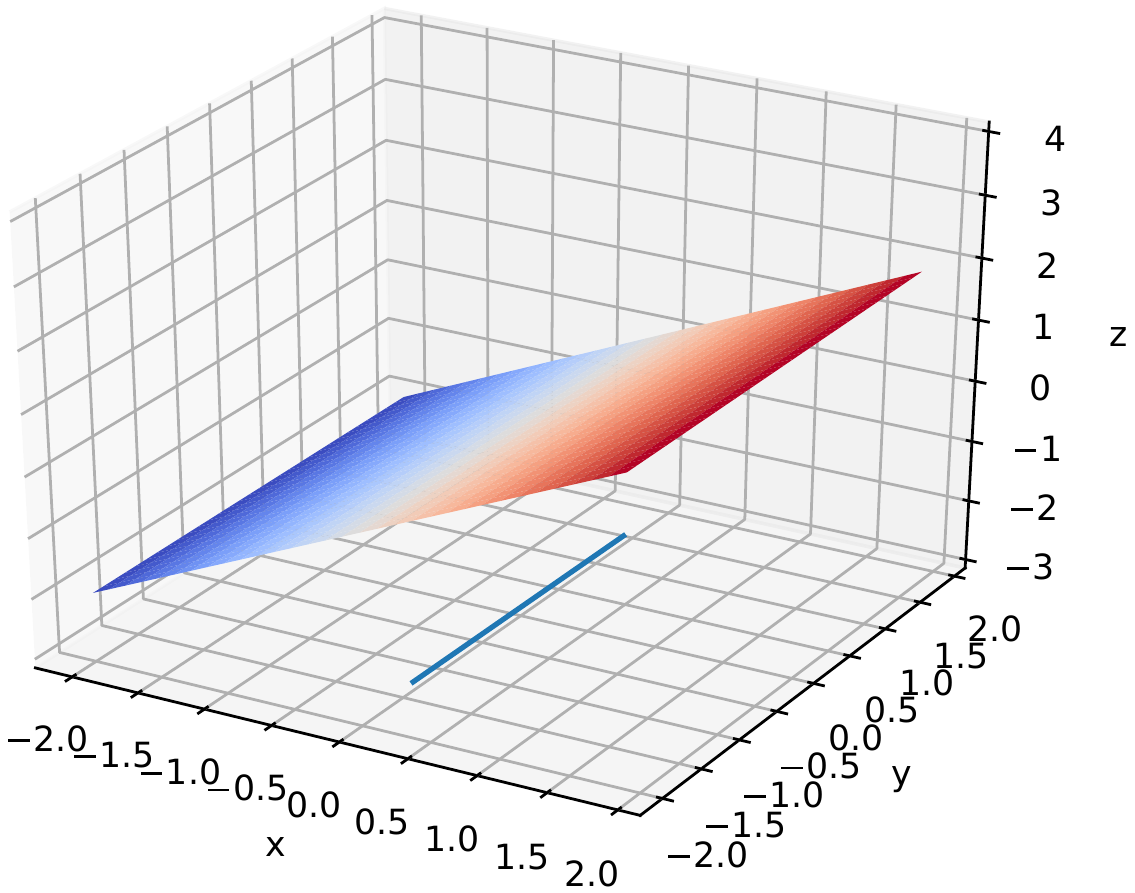}}
	\caption{Kink-Structure of \ssn Counterexample and its \pl approximation}	
	\label{Fig:SSN-counter}
\end{figure}

%

\section{Unambiguous Computation}\label{sec:unambiguous}

While the the local degree about a regular value $y$ of a continuous function $f:\R^n\rightarrow\R^n$ is constant under small perturbations of $f$, the number of solutions to the equation $f(x)=y$ may increase by any even number; cf Figure \ref{fig:pert} and Remarks \ref{rem:regular-root} and \ref{rem:open-problem}. In our context this means that during every Newton step one possibly faces the challenge of solving a piecewise linear system with exponentially many roots in a small ball about the last development point. 

However, we settled on a specific representation of our piecewise linearizations, the ANF. The dimensions of the ANF are predetermined by the structure of the evaluation graph of the underlying \pcs function; cf. Remark \ref{rem:ANF-dim}. 
 But while any \pl function $F:\Rm\rightarrow\Rn$ can be represented by an ANF, perturbations of the latter's entries -- while keeping its dimensions fixed -- cannot generate arbitrary perturbations of $F$. For example, we can represent the scalar function $F(x)=x$, which is trivially \pl, by an \anf where $J=1$ and all other blocks have dimension zero. Now $F$, as well as any function resulting from a slight perturbation of the entries of its \anf, i.e., of $J$, is a bijection. But not every \pl perturbation of $F$ is bijective; again cf. Figure \ref{fig:pert}. This lends a certain robustness to our Newton methods which we will now investigate further.

\begin{figure}
\centering
\subfigure{
\begin{tikzpicture}
\draw (-.5,0) ellipse (.5 and 1.25);
\draw (-.5,-1.25) -- (3.5,-1.25);
\draw (3.5,-1.25) arc (90:-90: .5 and -1.25);
\draw [dashed] (3.5,-1.25) arc (90:-90:-.5 and -1.25);
\draw (3.5,1.25) -- (-.5,1.25); 
\fill [gray,opacity=0.4] (-.5,-1.25) -- (3.5,-1.25) arc (90:-90:.5 and
-1.25) -- (-.5,1.25) arc (90:-90:-.5 and 1.25);
\fill[gray,opacity=0.1] (-.5,0) ellipse (.5 and 1.25);
\fill[gray,opacity=0.1] (3.5,0) ellipse (.5 and 1.25);
\draw[fill=black] (-.5, .5) circle (0.15em);
\draw[fill=black] (3.5, .5) circle (0.15em);
\draw[fill=black] (3.5, -.3) circle (0.15em);
\draw[fill=black] (3.5, -.8) circle (0.15em);

\draw[thick] (-.5,0.5) .. controls (.5,-.2) and (1.7,1) .. (3.5,.5);
\draw[thick] (3.5,-.3) .. controls (3,-.8) and (1.8,0) .. (1.4,-.55);
\draw[thick] (1.4,-.55) .. controls (1.,-1.2) and (3,-.5) .. (3.5,-.8);

\draw[fill=black] (.7, .31) circle (0.15em);
\draw[fill=black] (2., .58) circle (0.15em);
\draw[fill=black] (2., -.36) circle (0.15em);
\draw[fill=black] (2., -.81) circle (0.15em);

\fill[gray,opacity=0.3] (.7,0) ellipse (.5 and 1.25);
\draw (.7,-1.25) arc (90:-90: .5 and -1.25);
\draw [dashed] (.7,-1.25) arc (90:-90:-.5 and -1.25);
\draw (.1,1.25) -- (0.1,1.7) -- (1.3,1.5) -- (1.3,1.5) -- (1.3,-1.7) --
(0.1,-1.5) -- (.1,-1.25);
\draw[dashed] (.1,-1.25) -- (.1,1.25);

\fill[gray,opacity=0.3] (2.,0) ellipse (.5 and 1.25);
\draw (2.,-1.25) arc (90:-90: .5 and -1.25);
\draw [dashed] (2.,-1.25) arc (90:-90:-.5 and -1.25);
\draw (1.4,1.25) -- (1.4,1.7) -- (2.7,1.5) -- (2.7,1.5) -- (2.7,-1.7) --
(1.4,-1.5) -- (1.4,-1.25);
\draw[dashed] (1.4,-1.25) -- (1.4,1.25);
\end{tikzpicture}
}
\hspace*{.7cm}
\subfigure{
\begin{tikzpicture}
\begin{axis}[xmin=-3,xmax=9,yscale=.5,ticks=none,axis line
style={draw=none}]
\addplot[color=black,line width=1pt,domain=-2:1,samples=500]
    {.75*(x+2)};
\addplot[color=black,line width=1pt,domain=4:8,samples=500]
    {min(0.75*(x-4),max(0.75*(-x+7.43),0.75*(x-5)))};
\addplot[color=lightgray,line width=1pt,domain=-3:2,samples=500]
    {1.1};
\addplot[color=lightgray,line width=1pt,domain=3:9,samples=500]
    {1.1};
\end{axis}
\draw[fill=black] (1.4, 1.4) circle (0.15em);
\draw[fill=black] (4.83,1.4) circle (0.15em);
\draw[fill=black] (5.12,1.4) circle (0.15em);
\draw[fill=black] (5.4, 1.4) circle (0.15em);
\end{tikzpicture}
}
\caption{Solution increase by perturbation}
\label{fig:pert}
\end{figure}
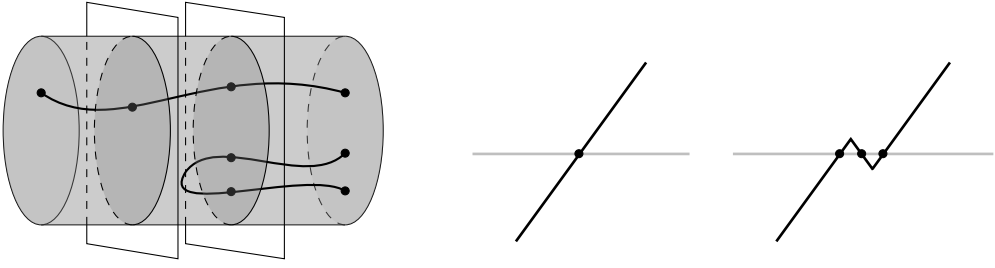

\subsection{Abs-Normal Form and Absolute Value Equation}\label{sec:ANF-perturbations}
In \cite{griewank2014abs} it was shown that $J$, the smooth part of the abs-normal form, can be assumed, without loss of generality, to be nonsingular. Moreover, by fixing $y$ and subsuming it into $b$, for regular $J$ the transformation 
\begin{align}\label{AVE}
 z  - L | z|   + Z J^{-1} Y |z| \; \equiv \; z-S\vert z\vert  \; = \;  \hat c \; \equiv \;   c  - ZJ^{-1}b  
\end{align}
of the abs-normal form into a so-called absolute value equation (AVE) was derived and the 1-1-solution correspondence of both systems proved. Denote by $\sig$ the set of $(s\times s)$- signature matrices, that is, diagonal matrices with entries in $\{-1,1\}$. Then the limiting Jacobians of the piecewise linear function 
\[F_S:\Rs\rightarrow\Rs\,,\qquad z\mapsto z-S\vert z\vert,\] whose facets are the orthants of $\Rs$, are the matrices $I-S\Sigma$, where $\Sigma\in\sig$. It is well known that $F_S$ is bijective if and only if all its limiting Jacobians have a positive determinant sign, cf. \cite{rump1997theorems}. Due to the continuity of the determinant, $F$ is bijective if and only if it is stably bijective in the sense that it is bijective under sufficiently small perturbations of $I$ and $S$. We remark that the determinants of the $(I-S\Sigma)$ cannot all be negative, because by the linearity of the determinant with respect to rank-$1$ updates all matrices within their convex hull would then have a negative determinant -- including the identity.

Let $F:\Rn\to\Rm$ be the piecewise linear function in \anf from which $F_S$ was derived. 
For $x\in\Rn$ and the associated vector of intermediates $z(x)$ we define $\Sigma(x)$, the signature at $x$, as the signature matrix in $\sig$ which satisfies $z(x)\Sigma(x)=\vert z(x)\vert$. 
It was shown in \cite{griewank2013stable} that the sets $P_\Sigma\equiv\{x\in\Rn:\Sigma(x)=\Sigma \}$ are relatively open disjoint polyhedra. Moreover, the limiting Jacobians of $F$ are the linear parts of the selection functions which are active on some $P_\Sigma$ which justifies denoting them by $J_\Sigma$.  
By \cite[Lem. 6.1]{griewank2014abs}, we have
\begin{align}\label{eq:ANF-det}
	\det(J_\Sigma)\ =\ \det(J)\cdot\det(I-S\Sigma)\,.
\end{align}
However, while $F_S$ always has $2^s$ limiting Jacobians $I-S\Sigma$, each one corresponding 
to the interior of an orthant of $\R^s$, some $P_\Sigma$ may be empty. In this case $F$ does 
not have a limiting Jacobian $J_\Sigma$. The case that $F$ has $2^s$ limiting Jacobians $J_\Sigma$ is called \textit{totally switched}. It occurs if and only if $Z$ is surjective. This requires
$s\leq n$, cf. \cite{griewank2014abs}. Now, if a \pl function in ANF is coherently oriented \textit{and} totally switched, equality \eqref{eq:ANF-det} implies that all matrices of the form $I-S\Sigma$, where $\Sigma\in\sig$, have the same nonzero determinant sign which then has to be positive. Due to the 1-1-correspondence of ANF and associated AVE solutions, this proves 
\begin{lemma}\label{lem:totally-switched}
	Let $F:\Rn\rightarrow\Rn$ be a \pl function in \anf which is coherently oriented and totally switched. Then $F$ is bijective.
\end{lemma}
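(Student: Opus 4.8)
The plan is to reduce the bijectivity of $F$ to that of the associated absolute value map $F_S\colon z\mapsto z-S|z|$ with $S=L-ZJ^{-1}Y$, for which a clean determinantal bijectivity criterion is already on the table, and then to push the two hypotheses through the determinant identity \eqref{eq:ANF-det}. Since the paper has recorded a $1$-$1$ correspondence between the solutions of the \anf system and those of the AVE $z-S|z|=\hat c$ (assuming, without loss of generality, that $J$ is nonsingular), it suffices to prove that $F_S$ is bijective: by the correspondence every right-hand side then yields a unique $x$, so $F$ is a bijection of $\R^n$.

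By \cite{rump1997theorems} the map $F_S$ is bijective exactly when all of its limiting Jacobians $I-S\Sigma$, $\Sigma\in\sig$, share a positive determinant sign, so the real work is to establish $\det(I-S\Sigma)>0$ for \emph{every} signature matrix. Here the total switching hypothesis enters decisively: by definition it guarantees that every polyhedron $P_\Sigma$ is nonempty, so that $F$ actually realizes all $2^s$ selection Jacobians $J_\Sigma$. This is precisely what upgrades coherent orientation—which a priori only constrains the Jacobians attached to nonempty cells—into a statement about all $2^s$ signatures.

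The key steps I would carry out are as follows. First, invoke total switching to assert that each $J_\Sigma$ exists. Second, apply coherent orientation to conclude that all $\det(J_\Sigma)$ carry one common nonzero sign. Third, use $\det(J_\Sigma)=\det(J)\det(I-S\Sigma)$ from \eqref{eq:ANF-det} together with $\det(J)\ne 0$ to transfer this to a common nonzero sign for all $\det(I-S\Sigma)$. The common sign could still in principle be negative, so the final step is to exclude that, using the remark already recorded before the statement: the determinant is affine along each rank-$1$ diagonal perturbation, hence a multilinear function on the box $[-1,1]^s$ whose extrema are attained at the vertices $\Sigma\in\sig$; were all vertex values negative, the interior value at $D=0$, namely $\det(I)=1$, would be negative too—a contradiction. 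Thus the common sign is positive, and Rump's criterion gives bijectivity of $F_S$, which the $1$-$1$ correspondence returns to $F$.

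The main obstacle is conceptual rather than computational: correctly threading the total-switching hypothesis so that coherent orientation, which is intrinsically a property of the \emph{realized} cells of $F$, controls all $2^s$ matrices $I-S\Sigma$ demanded by the bijectivity criterion for $F_S$. Once that bridge is secured—and once the all-negative case is ruled out by the identity-lies-in-the-convex-hull argument—the positivity of every $\det(I-S\Sigma)$ is immediate and the remaining transfer via the solution correspondence is routine.
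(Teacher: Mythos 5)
Your proof is correct and follows essentially the same route as the paper: total switching guarantees all $2^s$ limiting Jacobians $J_\Sigma$ exist, coherent orientation plus the identity $\det(J_\Sigma)=\det(J)\det(I-S\Sigma)$ forces a common nonzero sign on the $\det(I-S\Sigma)$, the convex-hull/identity argument rules out the negative sign, and Rump's criterion together with the \anf--AVE solution correspondence yields bijectivity of $F$. The paper compresses exactly this chain into the paragraph preceding the lemma, so there is nothing to add.
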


\subsection{Stable Coherent Orientation}
Recall that a \pl function is called coherently oriented if the linear parts of its selection functions all have the same nonzero determinant sign \cite{scholtespl}. Piecewise linear functions are surjective if they are coherently oriented and open if and only if they are coherently oriented. Due to the nearness property of the mapping degree we have 
$\operatorname{deg}(f, \Omega,y)=\operatorname{deg}(f, \Omega,y')$ for any two regular values of a surjective continuous function $f:\Rn\rightarrow\Rn$, where $\Omega$ is an open region that contains the preimages of both $y$ and $y'$. It is then justified to speak of \textit{the degree} of $f$.

Since the determinant signs of the differentials of its selection functions are either all positive or all negative, a coherently oriented piecewise linear function $F:\Rn\to\Rn$ has nonzero degree and all regular values have the same number of preimages, which equals the degree of $F$. For degree $>1$ or $<-1$ this implies that $F$ is a branched covering. For degree $\pm 1$ we have the following
\begin{lemma}\label{lem:degree-one}
	A piecewise linear function $F:\Rn\rightarrow\Rn$ is a homeomorphism if and only if it is coherently oriented of degree $\pm 1$.
\end{lemma}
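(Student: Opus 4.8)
The plan is to prove both directions using the structural facts already assembled in this subsection. For the forward direction, suppose $F$ is a homeomorphism. Then $F$ is in particular open and bijective, so by the cited equivalence of local openness and coherent orientation for piecewise linear maps (Prop.~2.3.7 in \cite{scholtespl}) $F$ is coherently oriented. Since $F$ is bijective, every regular value has exactly one preimage; but we already observed that for a coherently oriented $F$ the number of preimages of any regular value equals $|\deg(F)|$ and all selection-function determinant signs agree in sign, so $\deg(F)=\pm1$ according to that common sign.

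For the reverse direction, assume $F$ is coherently oriented of degree $\pm1$. First I would establish \emph{bijectivity}. Coherent orientation gives openness and surjectivity (as recalled at the start of the subsection), and the degree formula $\deg(F,\Omega,y)=\sum_{x\in F^{-1}(y)}\operatorname{sign}[\det(D_xF)]$ shows that for any regular value $y$ the number of preimages equals $|\deg(F)|=1$, since all the signs coincide and cannot cancel. Thus regular values are attained exactly once. To upgrade this to injectivity on \emph{all} of $\R^n$, I would use the density of regular values together with the finiteness and discreteness of $F^{-1}(y)$ for regular $y$ (both noted earlier in the Mapping Degree subsection): if $F(a)=F(b)=:y_0$ with $a\ne b$, a small perturbation of $y_0$ to a nearby regular value $y$ would have to split into at least two distinct preimages near $a$ and $b$, contradicting the single-preimage count. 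This argument needs the local conical/piecewise-affine structure of $F$ near $a$ and $b$ to guarantee that nearby regular values indeed have preimages close to each; I expect this local surjectivity-of-branches step to be the main technical obstacle, and the cleanest route is to invoke coherent orientation locally (each selection map is an affine bijection) so that every point has an open neighborhood on which $F$ is a local homeomorphism onto an open set.

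Finally, with $F$ shown to be a continuous bijection that is also a \emph{local} homeomorphism everywhere (again from coherent orientation, which makes each active affine piece invertible and the assembly open), the inverse $F^{-1}$ is automatically continuous, so $F$ is a homeomorphism. The two directions together give the claimed equivalence. The only delicate point is the injectivity argument in the reverse direction; everything else is a direct reading-off of the openness, surjectivity, and degree-counting facts already stated for coherently oriented piecewise linear maps.
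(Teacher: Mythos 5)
Your proposal follows essentially the same route as the paper's proof. Forward direction: homeomorphism gives openness and bijectivity, hence coherent orientation and single preimages of regular values, hence degree $\pm 1$. Reverse direction: degree counting gives exactly one preimage per regular value, and a putative critical value $y_0$ with distinct preimages $a\ne b$ is ruled out by producing a nearby regular value having preimages near both. The paper implements this last step exactly as you sketch it: take disjoint open neighborhoods $U_1,\dots,U_k$ of the preimages, observe that $\bigcap_{i} F(U_i)$ is a nonempty open set by openness of $F$, and pick a regular value inside it, which then has $k>1$ preimages, a contradiction; continuity of $F^{-1}$ then follows from openness of the bijection $F$.

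However, one justification you give is incorrect and should be removed. You claim that coherent orientation makes every point of $\R^n$ have a neighborhood on which $F$ is a homeomorphism onto an open set. That is false: coherent orientation is equivalent to \emph{openness}, not to local injectivity. A coherently oriented piecewise linear map of degree $2$ --- the branched coverings mentioned in the paper immediately before the lemma, e.g.\ the piecewise linear analogue of angle-doubling in the plane --- is open everywhere yet fails to be locally injective at the branch point; if coherent orientation implied local homeomorphism, the degree-$\pm 1$ hypothesis would be nearly superfluous. Fortunately your argument never needs local injectivity: for the splitting step it suffices that $F(U_a)$ and $F(U_b)$ are open neighborhoods of $y_0$, and for continuity of the inverse at the end it suffices that $F$ is a continuous open bijection. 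With "local homeomorphism" replaced by "open map" throughout, your proof coincides with the paper's.
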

\begin{proof}
	If $F$ is homeomorphic and thus bijective, it has a globally defined degree which has to be $\pm 1$ since each of its regular values has precisely one preimage. But then the differentials of the 
	selection functions of $F$ either all have determinant sign $+1$ or all have determinant sign $-1$. 
	
	Now assume $F$ is coherently oriented of degree $\pm 1$. Then every regular value has exactly one preimage. All critical values have at least one preimage due to the surjectivity of $F$ which is implied by its coherent orientation. No critical value of $F$ can have an infinite number of preimages since all selection functions of $F$ are regular. Now assume there exists a critical value $y$ with a finite number of preimages, say $F^{-1}(y)=\{x_1, x_2,\dots,x_k\}$, where $k>1$. Let $U_1, U_2,\dots,U_k$ be open neighborhoods of $x_1, x_2, \dots, x_k$. Then $\cap_{i\in[k]}F(U_i)$ is a nonempty open set due to the openness of $F$ and thus contains a regular value, which then has $k$ preimages by construction. But this contradicts the hypothesis and proves bijectivity of $F$. By its openness it follows that $F$ is a homeomorphism.
\end{proof}

We call a \pl function in \anf \textit{stably coherently oriented} if all modifications generated by small perturbations of $Z$ are also coherently oriented. We define \textit{stable bijectivity} analogously.
\begin{theorem}\label{thm:stable-bijectivity}
	Let $F:\Rn\rightarrow\Rn$ be a piecewise linear function in \anf representation with nonsingular block $J$ and $s\leq n$. Then the following are equivalent: 
	\begin{enumerate}
		\item $F$ is stably coherently oriented,
		\item $F$ is stably bijective,
		\item $F$ is coherently oriented under small perturbations of $Z,L,J, Y, c, b$,
		\item $F$ is bijective under small perturbations of $Z,L,J, Y,c,b$,
		\item the AVE associated to $F$ is uniquely solvable.
	\end{enumerate}
\end{theorem}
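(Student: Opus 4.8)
The plan is to establish the cycle $(1)\Rightarrow(5)\Rightarrow(4)\Rightarrow(3)\Rightarrow(1)$ together with the side arrows $(4)\Rightarrow(2)\Rightarrow(1)$; this closes the equivalence because it places $(2)$ and $(3)$ between $(1)$ and $(4)$. Four of these implications are cheap. For $(2)\Rightarrow(1)$ and $(4)\Rightarrow(3)$ I would use that a continuous \pl bijection of $\Rn$ is, by invariance of domain, an open map and hence coherently oriented (openness and coherent orientation being equivalent for \pl maps, cf. \cite[Prop. 2.3.7]{scholtespl}); applied under each admissible perturbation this turns stable bijectivity into stable coherent orientation. For $(3)\Rightarrow(1)$ and $(4)\Rightarrow(2)$ I would merely observe that perturbing $Z$ alone is a special case of perturbing all of $Z,L,J,Y,c,b$, so the stronger stability restricts to the weaker one.

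The two essential arrows both proceed by forcing the \emph{totally switched} regime, which by the discussion preceding Lemma~\ref{lem:totally-switched} holds precisely when $Z$ is surjective; since $s\le n$, such $Z$ are open and dense in $\R^{s\times n}$. For $(5)\Rightarrow(4)$ I would first recall that unique solvability of the AVE \eqref{AVE} is, via \cite{rump1997theorems} and the ensuing remark, equivalent to $\det(I-S\Sigma)>0$ for every $\Sigma\in\sig$, an \emph{open} condition on $S=L-ZJ^{-1}Y$. A small perturbation of $Z,L,J,Y$ (with $J$ kept nonsingular) moves $S$ only slightly and hence preserves all these signs, while perturbing $c,b$ affects only the right-hand side $\hat c$; thus the perturbed AVE is again uniquely solvable for every right-hand side. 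The $1$--$1$ correspondence between solutions of the \anf and of its AVE derived in \eqref{AVE} then upgrades unique solvability of the AVE for all right-hand sides to bijectivity of the perturbed $F$ itself, which is exactly $(4)$.

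For $(1)\Rightarrow(5)$ I would run the density argument in the opposite direction. Stable coherent orientation makes every sufficiently small perturbation of $Z$ coherently oriented, in particular every surjective $Z'$ near $Z$. Such an $F'$ is totally switched, so all $2^s$ Jacobians $J_\Sigma$ are present; by the determinant identity \eqref{eq:ANF-det} and the fact (noted before Lemma~\ref{lem:totally-switched}) that the numbers $\det(I-S'\Sigma)$ cannot all be negative, coherent orientation forces $\det(I-S'\Sigma)>0$ for all $\Sigma$, i.e. the AVE with matrix $S'=L-Z'J^{-1}Y$ is uniquely solvable. Letting $Z'\to Z$ through surjective matrices yields $\det(I-S\Sigma)\ge 0$ for all $\Sigma$ by continuity. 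The main obstacle, which I expect to absorb most of the work, is upgrading this to strict positivity: ruling out $\det(I-S\Sigma_0)=0$ for some $\Sigma_0$. The intended resolution is that a boundary zero is approached from the negative side, so that arbitrarily close surjective $Z'$ exist with $\det(I-S'\Sigma_0)<0$; for any such $Z'$ the totally switched $F'$ has Jacobians of both determinant signs and is not coherently oriented, contradicting $(1)$. Making this precise requires excluding the degenerate configurations in which the admissible family $S'=L-Z'J^{-1}Y$ stays tangent to the hypersurface $\{\det(\,\cdot\,\Sigma_0)=0\}$ at $S$ — equivalently, in which no $Z$-perturbation moves this determinant strictly below zero — and it is here that the bulk of the technical care is needed; the remaining continuity and density steps are routine.
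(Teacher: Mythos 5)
Your cheap implications and the first half of your $(1)\Rightarrow(5)$ coincide with the paper's argument and are fine: surjective $Z'$ near $Z$ give totally switched, coherently oriented perturbations, hence $\det(I-S'\Sigma)>0$ for all $\Sigma\in\sig$, hence $\det(I-S\Sigma)\ge 0$ in the limit. The genuine gap is the one you flag yourself, and it is not a removable technicality: your plan to contradict $(1)$ via nearby surjective $Z'$ with $\det(I-S'\Sigma_0)<0$ fails outright in the tangent case. Concretely, suppose some $u\neq 0$ with $(I-S\Sigma_0)u=0$ also satisfies $Y\Sigma_0 u=0$. The only reachable perturbations are $S'=S-\Delta Z\,J^{-1}Y$, and every one of them satisfies $(I-S'\Sigma_0)u=(I-S\Sigma_0)u+\Delta Z\,J^{-1}(Y\Sigma_0 u)=0$, so $\det(I-S'\Sigma_0)=0$ identically on the admissible family: no $Z$-perturbation whatsoever produces a negative determinant, and your contradiction is unavailable. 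The missing idea is that in this case you do not need a sign flip: any surjective $Z'$ near $Z$ makes $F'$ totally switched, so the cell $P_{\Sigma_0}$ of $F'$ is nonempty while the corresponding selection Jacobian is singular by \eqref{eq:ANF-det} --- and coherent orientation requires a common \emph{nonzero} determinant sign, so $(1)$ is already violated. With the resulting dichotomy (either $Y\Sigma_0$ is injective on $\ker(I-S\Sigma_0)$, in which case a leading-order determinant expansion does produce an admissible direction with $\det(I-S'\Sigma_0)<0$ and your argument runs; or a kernel vector is annihilated and the persistent zero determinant itself yields the contradiction) the proof closes --- but as submitted it is incomplete at the theorem's crux.

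For comparison, the paper proves $(1)\Rightarrow(5)$ by a different mechanism that avoids analyzing how the determinants move: it joins $Z$ to a surjective $\tilde Z$ by the segment $Z_t=t\tilde Z+(1-t)Z$, observes that all intermediate $F_t$ are coherently oriented by $(1)$, picks a value $y$ regular for both $F$ and $\tilde F$, confines $\bigcup_{t\in[0,1]}F_t^{-1}(y)$ to a bounded region $\Omega$, and invokes homotopy invariance of the Brouwer degree, $\deg(F,\Omega,y)=\deg(\tilde F,\Omega,y)=\pm 1$, since $\tilde F$ is bijective by Lemma \ref{lem:totally-switched}; Lemma \ref{lem:degree-one} then makes $F$ a homeomorphism. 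This trades your determinant computations for a global topological argument; note, though, that it terminates in bijectivity of $F$, and passing from there to the literal statement $5.$ (positivity of $\det(I-S\Sigma)$ for all $2^s$ signatures, including those whose cells $P_\Sigma$ are empty) still requires controlling the empty-signature determinants, which is exactly what your limiting argument plus the dichotomy above supplies. So your route, once repaired, is arguably the more direct path to $5.$; as it stands, it is not a proof.
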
 
\begin{proof}
	The entries of $S$ depend continuously on the entries of the ANF. Hence, the implications "$5.\Rightarrow 1., 2., 3., 4.$" are proved by the aforementioned 1-1-correspondence between solutions of \anf and associated AVE. Moreover, $2., 3., 4.$ imply $1.$ since a bijective \pl function is coherently oriented, cf. \cite{scholtespl}. Thus showing "$1.\Rightarrow 5.$" completes the proof. 

	Let $U$ be a neighborhood of $Z$ in $\R^{s\times n}$ such that for all $\tilde Z\in U$ the corresponding perturbation $\tilde F$ of $F$ is coherently oriented. Further, let $B\subset U$ be an open ball about $Z$. Then there exists some full rank $\tilde Z\in B$ and the corresponding perturbation $\tilde F$ is totally switched. By Lemma \ref{lem:totally-switched} $\tilde F$ is bijective. Now all matrices $Z_t:=t\tilde{Z}+(1-t)Z$, where $t\in[0,1]$, lie in $B$, so that the corresponding perturbations $F_t$ of $F$ are coherently oriented. Since the critical values of $F$ and $\tilde F$ are restricted to the images of the $(n-1)$-skeleta of their respective polyhedral domain decompositions, which have measure zero in the range, we can find a $y\in\Rn$ which is regular for both $F$ and $\tilde F$. Since the selection functions of all $F_t$ are affine isomorphisms, $F_t^{-1}(y)$ is compact for all $t\in[0,1]$. Hence, there exist some bounded region $\Omega\subset\Rn$ such that $F_{[0,1]}^{-1}(y)\subset\Omega\times[0,1]$. But this implies $\operatorname{deg}(F,\Omega,y)=\operatorname{deg}(\tilde F,\Omega,y)$. Now consider Lemma \ref{lem:degree-one}.  
\end{proof}

Using Theorem \ref{thm:stable-bijectivity}, we get from Proposition \ref{prop:approx}.2:
\begin{theorem}\label{thm:stable-bij-convergence}
Let \(F\in\text{span}(\Phi_\abs)\), \(x^\ast\) a root of \(F\), and assume \(\lozenge_{x^\ast}F\) satisfies any of the equivalent conditions in Theorem \ref{thm:stable-bijectivity}. Then there exists a ball $B(x^*, \rho)$ such that all tangent and secant linearizations with development points in $B(x^*, \rho)$ are bijective and both the tangent and secant mode piecewise differentiable Newton's methods converge from all starting points that lie in $B_R[x^*]$, where 
\[R\ :=\ \frac 13\, \min\left(\rho, \frac{1}{2c\gamma_F}\right)\; .\]
Moreover, for both methods the Newton path is uniquely determined by the respective starting point of the iteration.
\end{theorem}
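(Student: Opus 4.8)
The plan is to isolate two things—uniform bijectivity of all linearizations developed near $x^*$, and the resulting single-valuedness of the Newton operator—and then to feed these into the contractivity and convergence machinery already proved. The radius $R$ will be engineered to keep every iterate inside the bijectivity ball while meeting the contraction estimate \eqref{eq:Newton:basic}.

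First I would construct the ball $B(x^*,\rho)$ on which every tangent or secant linearization is bijective. By hypothesis $\lozenge_{x^*}F$ satisfies the equivalent conditions of Theorem \ref{thm:stable-bijectivity}, in particular \emph{stable bijectivity}: it remains bijective under all sufficiently small perturbations of the blocks $Z,L,J,Y,c,b$ of its \anf. The structural fact I would lean on is Remark \ref{rem:ANF-dim}: every linearization of $F$, tangent or secant, has an \anf of the \emph{same} fixed format, independent of the development point(s). Hence a linearization developed at $\cx,\hx$ near $x^*$ is genuinely a perturbation of the \anf of $\lozenge_{x^*}F$ in the sense required by Theorem \ref{thm:stable-bijectivity}. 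It then remains to note that these perturbations are small when $\cx,\hx$ are close to $x^*$, because the \anf entries depend continuously on the reference point(s): in the tangent case they are partial derivatives of the smooth pieces $G,\tilde F$ evaluated at $(\mathring x,|\mathring z|)$, and in the secant case they are the propagated secant matrices, whose slope formula \eqref{eqn:secant_cij} is continuous up to and including the diagonal $\cx=\hx$. Choosing $\rho$ so small that every such perturbation lands inside the stability neighborhood of Theorem \ref{thm:stable-bijectivity} yields bijectivity of $\lozenge_\cx^\hx F$ for all $\cx,\hx\in B(x^*,\rho)$.

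Next I would exploit bijectivity twice. Since a bijective \pl map has a unique preimage of $0$, the $\arg\min$ in the definition of the Newton operator collapses to a single point, so $N$ is single-valued on $B(x^*,\rho)$; this already shows the Newton path is uniquely determined by its starting point(s) and removes the ambiguity flagged in Remark \ref{rem:open-problem}. Moreover, a bijective \pl function is coherently oriented, and a coherently oriented \pl function is metrically regular with constant $c=\max_i\|A_i^{-1}\|$; applied to $\lozenge_{x^*}F$ this gives the strong metric regularity estimate $\|x-x^*\|\le c\|\lozenge_{x^*}F(x)\|$ required by Proposition \ref{prop:contractivity}. With the stability bound of Proposition \ref{prop:approx}.2, the radius $R=\tfrac13\min\!\bigl(\rho,\tfrac1{2c\gamma_F}\bigr)$ is then chosen to simultaneously keep all iterates inside $B(x^*,\rho)$ (so that every linearization along the path is bijective) and to satisfy \eqref{eq:Newton:basic}; feeding this into Proposition \ref{prop:contractivity} and the convergence theorem above delivers strict contraction together with quadratic order in tangent mode and order $\tfrac{1+\sqrt5}2$ in secant mode.

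The main obstacle is the first step: translating the \emph{abstract} notion of stable bijectivity—robustness under perturbations of the six \anf blocks—into a statement about the concrete one- or two-parameter family $\{\lozenge_\cx^\hx F\}$. The bridge is precisely Remark \ref{rem:ANF-dim} together with continuous dependence of the \anf coefficients on the development point(s), and the secant case is the delicate one: one must verify that the propagated secant matrices stay close to those of $\lozenge_{x^*}F$ \emph{uniformly} as the pair $(\cx,\hx)$ contracts to $x^*$, including along sequences in which $\cx$ and $\hx$ coalesce. Once this uniform bijectivity is secured, the remaining assertions are direct applications of results already established.
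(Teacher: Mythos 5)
Your proposal is correct and takes essentially the same route as the paper, which states this theorem without a separate proof as a direct consequence of Theorem \ref{thm:stable-bijectivity} and Proposition \ref{prop:approx}: your bridge via Remark \ref{rem:ANF-dim} and the continuous dependence of the \anf blocks on the development point(s) (including the coalescing secant case) is precisely the intended reading of that one-line derivation. The subsequent steps---bijectivity giving a single-valued Newton operator and, via coherent orientation, the metric regularity constant $c$ fed into Proposition \ref{prop:contractivity} and the convergence theorem---reconstruct the paper's machinery exactly, with the smaller radius $R=\tfrac13\min\left(\rho,\tfrac{1}{2c\gamma_F}\right)$ keeping every iterate inside the bijectivity ball.
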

In the situation of Theorem \ref{thm:stable-bij-convergence} the Newton steps can be calculated 
in (weakly) polynomial time via interior point methods for LCPs \cite{potra2007interiorpoint}, using the
equivalence of AVE and \lcp. Algorithms which are more efficient for special system structures
can be found, e.g., in \cite{brugnano2009iterative}, \cite{griewank2014abs}, \cite{SGE}.  

\subsection{Unstable Bijectivity} 

For every \(c \ge 0\) the following ANF is equivalent to the identity of \(x\), but it is not injective otherwise:
\begin{align*}
\begin{bmatrix}
z_0 \\
z_1 \\
y
\end{bmatrix} = 
\begin{bmatrix}
-c \\
0 \\
0
\end{bmatrix}+
\left[\begin{array}{c|cc}
1 & 0 & 0 \\
1 & 1 & 0 \\
\hline
\tfrac 12 & -\tfrac 12 & \tfrac 12
\end{array}\right] \cdot
\begin{bmatrix}
x \\
|z_0| \\
|z_1|
\end{bmatrix}
\end{align*}
We have
\begin{align*}
S\ =\ L - ZJ^{-1}Y\ =\
\begin{bmatrix}
0 & 0 \\
1 & 0
\end{bmatrix} - 
\begin{bmatrix}
-1 & 1 \\
-1 & 1
\end{bmatrix} =\ 
\begin{bmatrix}
1 & -1 \\
2 & -1
\end{bmatrix}\,.
\end{align*}
One can easily check that the matrices $I-S\Sigma$ are not coherently oriented, which implies that the corresponding \pl function, say $F_S$, is not bijective.
This is not a contradiction to the aforementioned 1-1-solution correspondence between \anf and AVE. Since $s>n$, the \anf solutions map into some lower dimensional set $U\subset\Rs$, which translates to local bijectivity of $F_S$ on $U$.

\section{Cardiovascular System}\label{sec:numerics}

In this section we want to demonstrate the tangent mode generalized Newton method applied to a series of nonlinear and nonsmooth systems of equations as they arise from solving differential algebraic equations (DAE) numerically.
The numerical instance in this experiment is a modification of a so-called lumped parameter model of the \emph{human cardiovascular system} introduced in \cite{Ottesen_cava}.
From a modeling point of view such systems are segmented into compartments, which in general wrap different parts of the circulatory system. 
In our case there are \(14\) of them. 
\(4\) compartments represent all the heart chambers. 
These are the left and right atrium (la and ra) or pre-chamber as well as the left and right ventricle (lv and rv) also referred to as main chambers. 
Finally, each of the systemic and pulmonary circulations are represented by \(5\) compartments, whose blood vessels can be subdivided into \(5\) different categories:
\begin{description}
  \item[arterial system] larger arteries (sa1 resp. pa1),\; 
  arteries and arterioles (sa2 resp. pa2)
  \item[capillaries] arterioles and capillaries (sa3 resp. pa3)
  \item[vein system] veins and venules (sv2 resp. pv2),\; larger veins (sv1 resp. pv1)
\end{description}
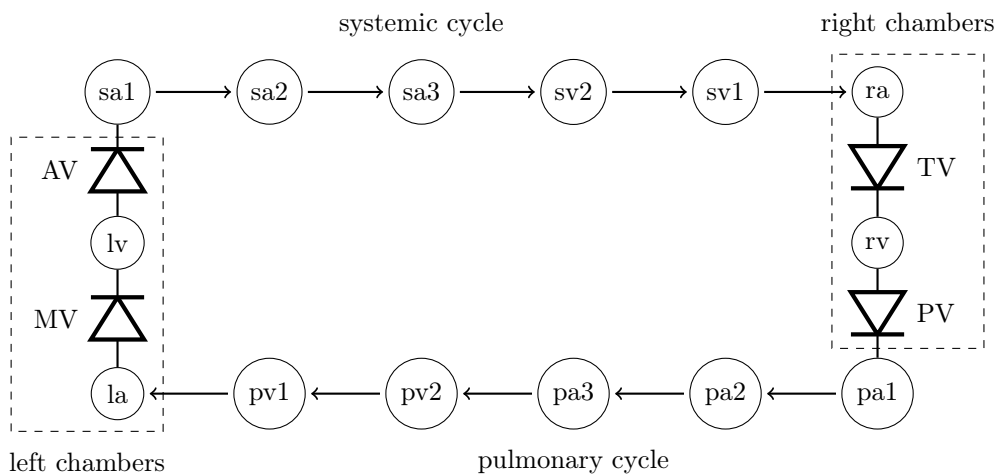
\begin{figure}[ht]\centering
  \tikzset{
      pil/.style={
             ->,
             thick,
             shorten <=2pt,
             shorten >=2pt,}
  }
  \begin{tikzpicture}
      \node[draw, circle] (la)  at (0, 0)  {la};
      \node[draw, circle] (lv)  at (0, 2)  {lv};
      \node[draw, circle] (sa1) at (0, 4)  {sa1};
      \node[draw, circle] (sa2) at (2, 4)  {sa2};
      \node[draw, circle] (sa3) at (4, 4)  {sa3};
      \node[draw, circle] (sv2) at (6, 4)  {sv2};
      \node[draw, circle] (sv1) at (8, 4)  {sv1};
      \node[draw, circle] (ra)  at (10, 4) {ra};
      \node[draw, circle] (rv)  at (10, 2) {rv};
      \node[draw, circle] (pa1) at (10, 0) {pa1};
      \node[draw, circle] (pa2) at (8, 0)  {pa2};
      \node[draw, circle] (pa3) at (6, 0)  {pa3};
      \node[draw, circle] (pv2) at (4, 0)  {pv2};
      \node[draw, circle] (pv1) at (2, 0)  {pv1};

      \path[every node/.style={font=\sffamily\small}]
       (sa1) edge[pil] (sa2)
       (sa2) edge[pil] (sa3)
       (sa3) edge[pil] (sv2)
       (sv2) edge[pil] (sv1)
       (sv1) edge[pil] (ra)
       (pa1) edge[pil] (pa2)
       (pa2) edge[pil] (pa3)
       (pa3) edge[pil] (pv2)
       (pv2) edge[pil] (pv1)
       (pv1) edge[pil] (la);

      \draw[color=black, thick] (la.north) to [Do, l=MV] (lv.south);
      \draw[color=black, thick] (lv.north) to [Do, l=AV] (sa1.south);
      \draw[color=black, thick] (ra.south) to [Do, l=TV] (rv.north);
      \draw[color=black, thick] (rv.south) to [Do, l=PV] (pa1.north);

      \draw[dashed] (-1.4, -0.5) rectangle (0.6, 3.4);
      \draw[dashed] (9.4, 4.5) rectangle (11.4, 0.6);

      \node at (-0.4, -0.9) (left) {left chambers};
      \node at (10.4, 4.9) (right) {right chambers};
      \node at (4, 4.9) (sys) {systemic cycle};
      \node at (6, -0.9) (oul) {pulmonary cycle};
  \end{tikzpicture}
  \caption{Schematic overview of all \(14\) compartments and their relations.}
  \label{pic:cava}
\end{figure}
The diodes in figure \ref{pic:cava} (MV - Mitral valve, AV - Aortic valve, TV - Tricuspid valve and PV - Pulmonary valve) indicate positions of the \(4\) heart valves, which allow a one-way low drag directional flow. 
%
%
%
In contrast to the original source we replace the discontinuous or binary behavior of the valves by a piecewise linear model resembling the anatomic process slightly more accurately.
Finally for the modified system we introduce the following equations for its compartments:

\vspace{.2cm}
\begin{center}
\begin{tabular}{cc}
Large Vessels \(k \in \left\{\begin{gathered} \rm{sa1}, \rm{sv1}, \\ \rm{pa1}, \rm{pv1}\end{gathered}\right\}\) &
\qquad\qquad smaller vessels \(k \in \left\{\begin{gathered}\rm{sa2}, \rm{sa3}, \rm{sv2}, \\ \rm{pa2}, \rm{pa3}, \rm{pv2}\end{gathered}\right\}\) \qquad\qquad \\ & \\
\(\begin{aligned}
	L\cdot\dot q_k &= p_k - p_\suc - R_k\cdot q_k, \\
	\dot v_k &= q_\pre - q_k, \\
	C_k\cdot p_k &= v_k - V_k,
\end{aligned}\) &
\(\begin{aligned}
	R_k\cdot q_k &= p_k - p_\suc, \\
	\dot v_k &= q_\pre - q_k, \\
	C_k\cdot p_k &= v_k - V_k,
\end{aligned}\)
\end{tabular}
\end{center}

\vspace{.2cm}

where \(p_\suc\) means the pressure within the succeeding compartment and \(q_\pre\) means the blood flow through the preceding compartment, in the sense of figure \ref{pic:cava}.
In advance \(L_k, R_k, C_k, V_k\) are compartment-specific, positive constants. 
The heart chambers are modeled as follows: 
\begin{center}
\begin{tabular}{cc}
Atria \(k \in \{\rm{la}, \rm{ra}\}\) &
Ventricles \(k \in \{\rm{lv}, \rm{rv}\}\) \\ & \\
\(\begin{aligned}
	\theta_k &= \tfrac 12(|p_k - p_\suc| \\
	     &\qquad - |p_k - p_\suc - 1| + 1), \\
	(1-\theta_k)q_k &= \theta_k(p_k - p_\suc - R_k q_k - L_k \dot q_k), \\
	\dot v_k &= q_\pre - q_k, \\
	p_k &= E_k\cdot (v_k - V_k),
\end{aligned}\) &
\(\begin{aligned}
	\theta_k - \tfrac 12 &= \tfrac 12(|p_k - p_\suc - R_k q_k| \\
	     &\qquad - |p_k - p_\suc - R_k q_k - 1|), \\
	(1-\theta_k)q_k &= \theta_k(p_k - p_\suc - R_k q_k - L_k \dot q_k), \\
	\dot v_k &= q_\pre - q_k, \\
	p_k &= E_k(t)\cdot (v_k - V_k),
\end{aligned}\)
\end{tabular}
\end{center}
where \(E_{\rm{la}}, E_{\rm{ra}}\) are more positive constants for the atria, whereas \(E_k(t)\) are so called elastance functions for the ventricles \(k \in \{\rm{lv}, \rm{rv}\}\) and defined by:
\begin{align*}
  && E_k(t) &= E_{\min{}\text{, k}}(1-\phi(t)) + E_{\max{}\text{, k}}\phi(t) \\
  \text{where}&&\phi(t) &= \max\left(0, \alpha\sin\left(\bar t\right) - \beta\sin\left(2\bar t\right)\right) \quad\text{ and }\quad 
  \bar t = \pi\frac{t\!\!\!\!\mod t_h}{\kappa_0 + \kappa_1t_h},
\end{align*}
with \(E_{\min{}\text{, k}}, E_{\max{}\text{, k}}\), as well as \(\alpha, \beta, t_h, \kappa_i\), our last positive constants. Representative values for all constants can be found in the aforementioned reference \cite{Ottesen_cava}. Note that we made use of the following identity for the cut-off function in our modeling:
\begin{align*}
	\theta_k \equiv \max(0, \min(1, x)) = \tfrac 12(|x| - |x - 1| + 1).
\end{align*}

Now we focus on solving the iterative sequence of nonsmooth and nonlinear systems of equations:
\begin{align}
        \forall i \in \{0, 1, \ldots, N\}:\; \underset{x_i}{\mathrm{solve}}\;\, 0\; =\; F_i(x_i) \equiv \mathfrak f\left( \tfrac{\alpha_i}hD\cdot x_i + \tfrac 1h D\cdot x_{i-1},\; x_i,\; t_0 + i\cdot h \right). \label{eqn:itSeries}
\end{align}
which arise from time discretization, by the implicit Euler method, applied to the differential algebraic description of the human cardiovascular system:
\begin{align}
        0 = \mathfrak f\left(\tfrac{\mathrm d}{\mathrm d t} \big[ D\cdot x(t) \big],\, x(t), t\right),\quad \text{with}\quad
        D = \diag(\delta_{\mathbf v}, \delta_{\mathbf q}, \delta_{\mathbf p}), \label{eqn:CavaDAE}
\end{align}
where \(x = (\mathbf v, \mathbf q, \mathbf p)\) is a vector composed of all intrinsic variables, with \(\mathbf v = (v_k)_{k \in \mathcal C}\) the blood volumes in each compartment \(k \in \mathcal C\), \(\mathbf q = (q_k)_{k \in \mathcal C}\) the blood flows through all compartments and \(\mathbf p = (p_k)_{k \in \mathcal C}\) the blood pressures within all compartments. Furthermore, we have \(\delta_{\mathbf v} = (1, \dots, 1)\), \(\delta_{\mathbf p} = (0, \dots, 0)\) and
\begin{align*} 
  \delta_{\mathbf q, k} = \begin{cases} 
    0 &\text{if } k \text{ is a smaller vessel} \\ 
    1 &\text{else} 
  \end{cases}.
\end{align*} 
Thus \(\mathcal C = \{ \rm{la}, \rm{lv}, \rm{sa1}, \rm{sa2}, \rm{sa3}, \rm{sv2}, \rm{sv1}, \rm{ra}, \rm{rv}, \rm{pa1}, \rm{pa2}, \rm{pa3}, \rm{pv2}, \rm{pv1} \}\) make up the set of all compartments of our system.
The units of the \(3\) intrinsic variables are mmHg or millimeter of mercury for pressures, ml/s or milliliters per second for flows and ml or milliliters for volumes.

\begin{figure}[htp]\centering
        \includegraphics[width=\linewidth]{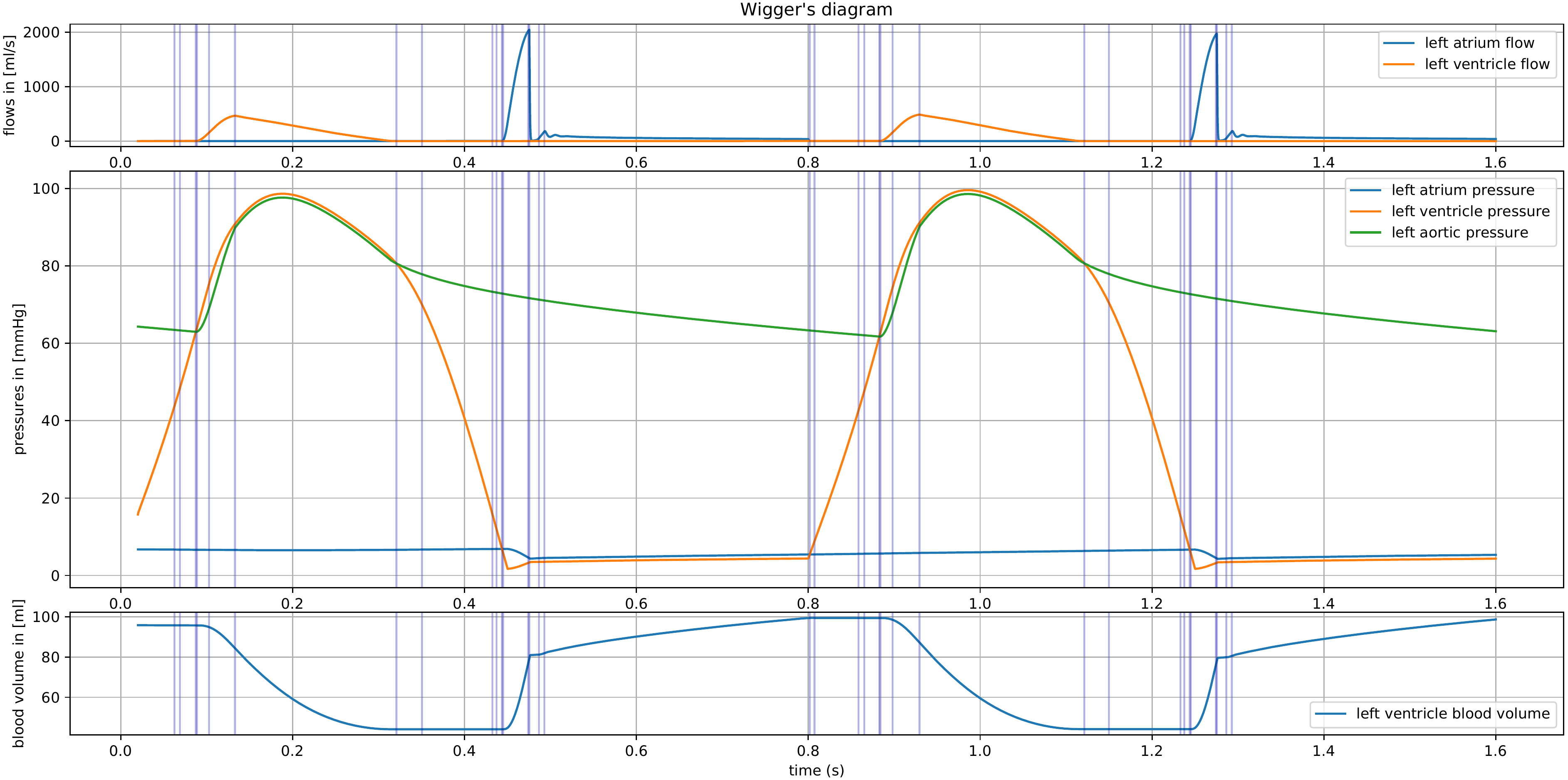}
  \caption{Numerical Wigger's diagram for left heart chambers. Blue background bars define intervals of length \(h\), where kinks were crossed to solve the piecewise linearizations.}
  \label{pic:leftWiggers}
  
        \includegraphics[width=\linewidth]{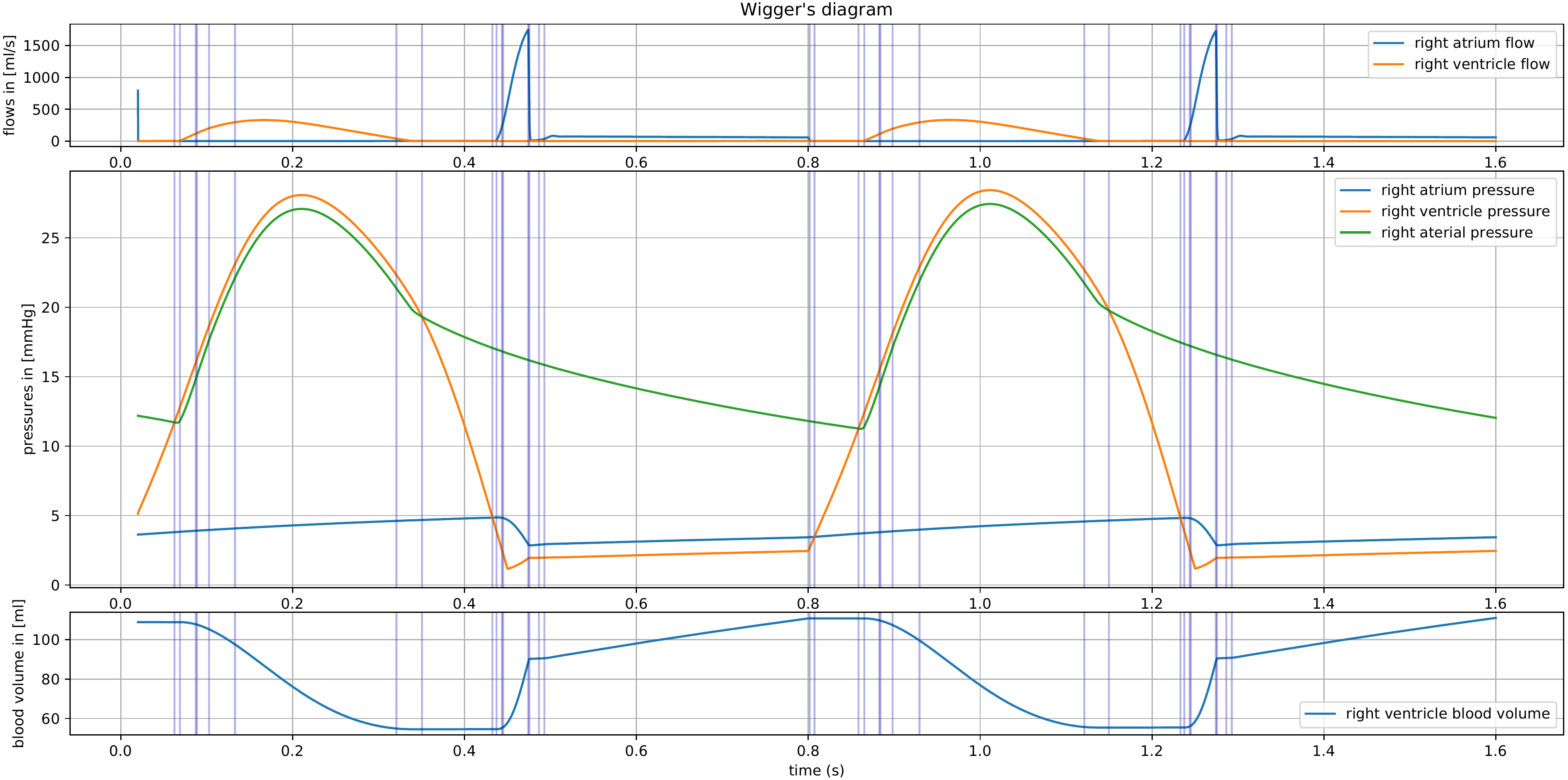}
  \caption{Numerical Wigger's diagram for right heart chambers. Blue background bars define intervals of length \(h\), where kinks were crossed to solve the piecewise linearizations.}
  \label{pic:rightWiggers}
\end{figure}

\begin{figure}[ht]\centering
        \includegraphics[width=\linewidth]{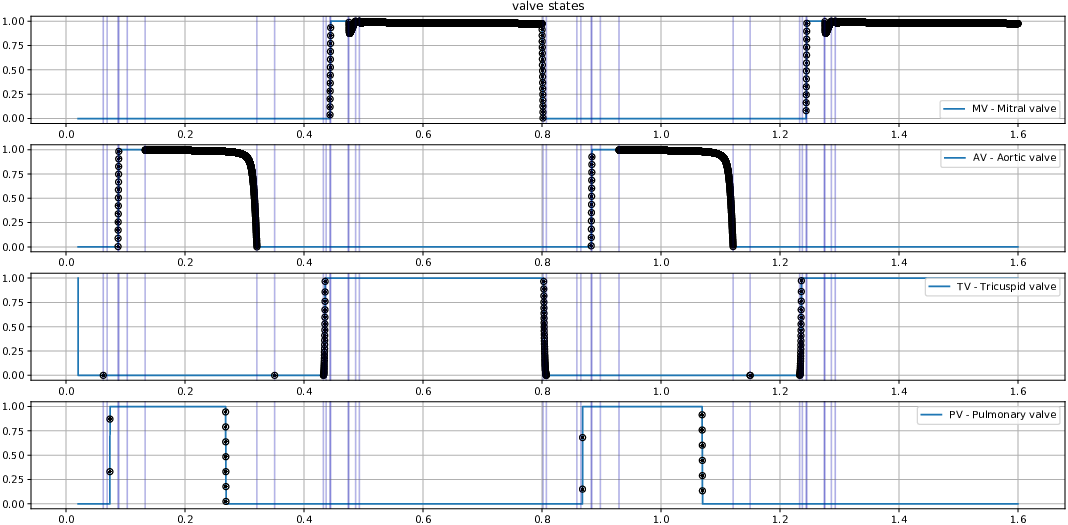}
  \caption{State of all \(4\) valves. Blue background bars define intervals of length \(h\), where kinks were crossed to solve the piecewise linearizations. Black bullets are points of evaluation during the transition of the corresponding valve.}
  \label{pic:valveStates}
\end{figure}

We have solved the sequence of nonlinear equation systems \eqref{eqn:itSeries} for different time step values \(h = 1e{-}3, 1e{-}4, 1e{-}5\). 
Whenever each of the valves is either closed tightly or opened fully, the system is piecewise linear. 
Thus exactly one Newton step will already solve it.
Whereas during the transition of at least one of the valves, the system to solve becomes quadratic in \(1\) up to \(4\) of its components.
Here a minimal step size of at least \(h \le 1e{-}3\) is required because a sufficient number of evaluation points during the transition of all valves is necessary to avoid strange backflow behavior.
This is a consequence of using a non-generalized integrator for nonsmooth differential equations (for further details on generalized integrators see \cite{PDODE}).

Certain components of the numerical solution for $h=1e{-}4$ are depicted in Figures \ref{pic:leftWiggers} and \ref{pic:rightWiggers}.
These two figures resemble the so-called Wigger's diagram, as it can be found in many medical textbooks. 
The original diagram shows idealized curves of blood pressures in the pre-chamber or atrium, main chamber or ventricle and leaving blood vessels from the heart during one or two heart periods. 
It also displays the blood volume in the main chamber and other quantities which are not considered in our modeling.
We have included blood flows from the pre- into the main chamber and from the main chamber into leaving blood vessels, since they are affected directly by valve actions.
The blue bars in the background of both figures define intervals of length \(h\) where solving the piecewise linearization resulted in a change of the signature of switching variables or, in other words, where kinks have been crossed. 
In any case we have experienced one kink-crossing at most. Further, the tangent mode generalized Newton method solves almost all instances of \eqref{eqn:itSeries} in exactly one step regardless of the states of the valves (see Table \ref{tab:stats}).
\begin{table}[ht]\centering
  \begin{tabular}{|c||c|c|}
    \hline
    \(h\) & \(1\) Newton step & \(2\) Newton steps \\
    \hline
    \(1e{-}3\) & \(1580\) & \(0\) \\
    \(1e{-}4\) & \(15799\) & \(1\) \\
    \(1e{-}5\) & \(158000\) & \(0\) \\
    \hline
  \end{tabular}
  \caption{Number of equation solvings and Newton steps per stepsize.}
  \label{tab:stats}
\end{table} 
The solutions of all piecewise linearizations are contained in the interior of some polyhedron from its polyhedral decomposition.
The linear operator of the corresponding linear selection function which is active on the same polyhedron proved to be nonsingular. Thus local convergence for the generalized Newton method is guaranteed in the sense of this paper. Moreover, the local coherent orientation throughout all iterations guarantees unambiguously trackable roots in the sense of Remark \ref{rem:open-problem}. 

A \pl solver suitable to the purpose of solution tracking on successive perturbations of a \pl system, which stay locally coherently oriented about their respective reference points throughout, is the \pl Newton method described in \cite{griewank2013stable}. 
It was started with the signature of the reference point of the current piecewise linearization 
and never needed more than one iteration, which corresponds to the fact that the development points for the tangent mode piecewise linearizations never crossed more than one kink.

\section{Conclusion and Final Remarks}\label{sec:conclusion}

The focus of this work has been the analysis of the Newton type algorithms developed in \cite{NewtonPL}. Two types of local \pl approximations, tangent and secant mode, were plugged into the general algorithmic scheme of semismooth Newton. Mapping degree theory was used to show: 
if the tangent mode piecewise linearization at an isolated root of a \pcs function is locally 
coherently oriented, then the image of small perturbations of this local model still contains 
a ball about the origin, yielding convergence of our methods under this condition.

A key feature of the tangent mode piecewise linearizations is that the limiting Jacobians 
at their reference point $\rx$ coincide with a subset of the -- but not necessarily all -- 
limiting Jacobians of the underlying \pcs function at $\rx$. This leads to a certain robustness
of our method in that we may still have guaranteed convergence even if there exists no 
neighborhood of the root throughout which classical (semismooth) Newton steps are defined. 

The gain in robustness comes at the cost of limiting our applications to \pcs 
functions. However, in the context of actual implementations of functions, the condition 
of finite evaluation procedures does not seem like a severe limitation. The numerical example in 
Section \ref{sec:numerics} shows a real world example that falls into this class. 

Another aspect that necessitates careful consideration is the fact that solving piecewise 
linear system is a potentially hard problem. However, the phenomenological observation is
that the requirement of local openness of the \pcs function enforces enough "good" structure
on the piecewise linear model so that the Newton iterates can usually be computed at essentially
the cost of the solution of a linear system. In the case that the local model is stably 
coherently oriented at the root, a (weakly) polynomial cost is guaranteed in a sufficiently small neighborhood.

In the present article we proved a number of statements which could be qualified as "exact".
For future endeavors our interest lies in matters concerning perturbations. For example,  
one might consider ill-posed problems, where the existence of a root is not perturbation-stable. Another line of inquiry is the investigation of conditions under which 
a nonzero local degree of the \pl model at a root is sufficient for convergence. 
Here one might ask whether it is enough to ask that a locally open \pl model can be obtained
by perturbation of the reference point, or that a locally open model can be obtained by perturbation of the \anf corresponding to the piecewise linearization at the root (these two criteria are not equivalent). 

Finally, we would like to learn more about the structure correspondences of \pcs function 
and \pl model. The open problem stated in Remark \ref{rem:open-problem} is one example 
question. Another one concerns a key result of this paper: Can one describe a principle after which (singular) limiting Jacobians at a reference point $\rx$ are 
ignored by the tangent mode piecewise linearization $\lozenge_{\rx}F$. And, in consequence: Is it possible to classify the type of singular situations that our generalized Newton's methods can 
deal with. 

\section*{Acknowledgements}
The work for the article has been partially conducted within the Research Campus
MODAL funded by the German Federal Ministry of Education and Research
(BMBF) (fund number \(05\text{M}14\text{ZAM}\)).

\bibliography{autodiff}
\bibliographystyle{alpha}

\end{document}